\newtheorem{theorem}{Theorem}[section]
\newtheorem{lemma}[theorem]{Lemma}
\newtheorem{proposition}[theorem]{Proposition}
\newtheorem{definition}[theorem]{Definition}
\numberwithin{equation}{section}
\begin{document}

\newcommand{\cc}{\mathfrak{c}}
\newcommand{\N}{\mathbb{N}}
\newcommand{\Q}{\mathbb{Q}}
\newcommand{\R}{\mathbb{R}}

\newcommand{\PP}{\mathbb{P}}
\newcommand{\forces}{\Vdash}
\newcommand{\dom}{\text{dom}}
\newcommand{\osc}{\text{osc}}

\title{On biorthogonal systems whose functionals are finitely supported}

\author{CHRISTINA BRECH}
\thanks{The first author was supported by FAPESP fellowship (2007/08213-2), 
which is part of Thematic Project FAPESP (2006/02378-7). Part of the research was done
at the Technical University of \L\'od\'z where the first author was partially supported by Polish Ministry of
Science and Higher Education research grant N N201
386234.} 
\address{Instituto de Matem\'atica, Estat\'\i stica e Computa\c c\~ao Cient\'\i fica, Universidade Estadual de Campinas, 
Rua S\'ergio Buarque de Holanda, 651 - 13083-859, Campinas, Brazil}
\curraddr{Departamento de Matem\'atica, Instituto de Matem\'atica e Estat\'\i stica, Universidade de S\~ao Paulo,
Rua do Mat\~ao, 1010 - 05508-090, S\~ao Paulo, Brazil}
\email{christina.brech@gmail.com}

\author{PIOTR KOSZMIDER}
\thanks{The second  author was partially supported by Polish Ministry of
Science and Higher Education research grant N N201 386234. Part of the research was done
at the State  University of Campinas UNICAMP where the second
 author was partially supported by the Department of Mathematics. }

\address{Instytut Matematyki Politechniki \L\'odzkiej,
ul.\ W\'olcza\'nska 215, 90-924 \L\'od\'z, Poland}

\email{\texttt{pkoszmider.politechnika@gmail.com}}

\subjclass{}
\date{}
\keywords{}

\begin{abstract} We show that for each natural $n>1$  it is consistent that
there is a compact Hausdorff space $K_{2n}$
such that in $C(K_{2n})$ there is no uncountable (semi)biorthogonal 
sequence $(f_\xi,\mu_\xi)_{\xi\in \omega_1}$ where $\mu_\xi$'s are atomic measures
with supports consisting of at most $2n-1$ points of $K_{2n}$, but there are
biorthogonal systems $(f_\xi,\mu_\xi)_{\xi\in \omega_1}$ where $\mu_\xi$'s are atomic measures
with supports consisting of $2n$ points. 
This complements a result of Todorcevic that it is consistent that each
nonseparable Banach space  $C(K)$  has an uncountable biorthogonal system where
the functionals are measures of the form $\delta_{x_\xi}-\delta_{y_\xi}$
for $\xi<\omega_1$ and $x_\xi,y_\xi\in K$.
It also follows that
  it is consistent that the irredundance of
the Boolean algebra $Clop(K)$ or the Banach algebra $C(K)$
for  $K$  totally disconnected can be strictly smaller
than the sizes of biorthogonal systems in $C(K)$.
The compact spaces exhibit an interesting behaviour with respect to
known cardinal functions:  the hereditary density of the powers $K_{2n}^k$
is countable up to $k=n$ and it is uncountable (even the spread
is uncountable) for $k>n$.

\end{abstract}

\maketitle

\section{Introduction}

If $X$ is a Banach space and $X^*$ its dual,
then $(x_i,x_i^*)_{i\in I}\subseteq X\times X^*$ is called a biorthogonal
system if $x_i^*(x_i)=1$ and $x^*_i(x_j)=0$ if $i\not=j$
for each $i,j\in I$. If $\alpha$ is
an ordinal, a transfinite
sequence $(x_i,x_i^*)_{i< \alpha}\subseteq X\times X^*$ is called a semibiorthogonal
sequence if $x_i^*(x_i)=1$, $x^*_i(x_j)=0$ if $j<i<\alpha$
and  $x^*_i(x_j)\geq 0$ if $i<j<\alpha$. 

Biorthogonal systems have always played an important role in the theory
of Banach spaces (\cite{biorthogonal}) because all kinds of bases in
Banach spaces are in particular $X$-parts of biorthogonal 
systems (\cite{singer1} and \cite{singer2}).
Semibiorthogonal sequences have been introduced quite recently
(\cite{borwein}) in the relation with the  sets
in Banach spaces  supported by all of their points (\cite{rolewicz},
\cite{lazar}, \cite{convex}).

We will mainly deal with biorthogonal systems in Banach spaces $C(K)$
of all continuous functions on a compact Hausdorff space $K$ with the
supremum norm. Its dual space is isometric to the Banach space $M(K)$
of all Radon measures on $K$ with the variation norm, and so, we will identify
this dual with $M(K)$.
If $K$ is a compact Hausdorff space and $x\in K$, 
$\delta_{x}$ denotes the functional on $C(K)$ defined by $\delta_x(f)=f(x)$
for all $f\in C(K)$.

This paper is motivated by the following question: {\it If 
there is an uncountable biorthogonal system
 $(x_\xi,x^*_\xi)_{\xi\in \omega_1}$ in $C(K)\times M(K)$,  is there also
one such that $$x^*_\xi=\delta_{x_\xi}-\delta_{y_\xi}$$
for some points $x_\xi,y_\xi\in K$?}
We will follow \cite{mirnaistvan} and call such a biorthogonal
system a nice biorthogonal system.

The origin of this question is that in all concrete situations so far analyzed
in the literature the above question
has positive answer. Moreover, it happens for a good reason, namely,
it follows from a recent result
of Todorcevic that Martin's axiom together with the negation of
the continuum hypothesis implies the positive answer to this question.
Indeed, analyzing the proof of Theorem 11 of \cite{stevobio},
one gets two cases: the first when $K$ is hereditarily separable, which is the
main part of that proof and the constructed biorthogonal system is nice;
and the second case, when $K$ is c.c.c. but contains a nonseparable
subspace, then the proof of Theorem 10 of \cite{stevobio} provides
the required nice system; if $K$ is not c.c.c., one can easily
obtain an uncountable nice biorthogonal system.

There is one more reason why nice biorthogonal systems
appear frequently in the context of Banach spaces $C(K)$ 
and which makes them more meaningful.
Namely, $(f_\alpha)_{\alpha\in \kappa}$ is the $X$-part
of a nice biorthogonal system if and only if
$(f_\alpha)_{\alpha\in \kappa}$ is irredundant 
in the Banach algebra $C(K)$, in the sense that
no $f_\alpha$  belongs to the Banach subalgebra generated by the
remaining elements. This is a consequence
of the Stone-Weierstrass theorem.
If $K$ is totally disconnected and  $f_\alpha$'s are characteristic
functions of clopen $A_\alpha\subseteq K$, we obtain
a well-known notion of an irredundant set in a Boolean algebra (see e.g.,
\cite{monk}),
i.e., a set where no element belongs to the Boolean  algebra
generated by the remaining elements.
The irredundance of a Boolean algebra is the supremum of
cardinalities of irredundant sets. 

To formulate properly our main results
we need the following:

\begin{definition} Let $K$ be a compact Hausdorff space
and $n\in \N$. We say that the functionals of 
a sequence $(f_\xi,\mu_\xi)_{\xi\in \omega_1}\subseteq C(K)\times M(K)$
are $n$-supported if each $\mu_\xi$ is an atomic measure
whose support consists of no more than $n$ points of $K$.
\end{definition}

\begin{theorem} For each natural  $n>1$, it is consistent that
there is a compact Hausdorff space $K_{2n}$ such that in 
$C(K_{2n})$ there is no uncountable semibiorthogonal 
sequence whose functionals are $2n-1$-supported, but there are
biorthogonal systems whose functionals are $2n$-supported.  

Moreover, $K_{2n}^n$ is hereditarily separable
but $K_{2n}^{n+1}$ has an uncountable discrete subspace. 
Neither the Banach algebra $C(K_{2n})$ nor the
Boolean algebra  $Clop(K_{2n})$ have
an uncountable irredundant family. In particular, $C(K_{4})$
has an uncountable biorthogonal system but it has no uncountable nice biorthogonal system.
\end{theorem}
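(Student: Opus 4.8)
The plan is to build $K_{2n}$ by forcing, as a totally disconnected space, so that $Clop(K_{2n})$ is the direct limit (or Stone space of a Boolean algebra) generated by a tree-indexed family $\{A_s : s\in 2^{<\omega_1}\}$ of clopen sets together with $\omega_1$ distinguished points, where the combinatorics are rigged so that any attempt to find an uncountable $(2n-1)$-supported semibiorthogonal sequence collapses. First I would isolate the crucial $n$-dimensional combinatorial object: a family of functions or a partition coloring $2^{<\omega_1}$ (or $[\omega_1]^{2n}$) with the property that every uncountable subset contains a configuration of $2n$ points witnessing a ``return'' that makes any candidate functional $\mu_\xi$, supported on $\le 2n-1$ points, fail to separate; simultaneously the same object is symmetric enough that supports of size exactly $2n$ do work, giving the positive half. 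This is the combinatorial heart and is where almost all the work lies; I expect the forcing to be a finite-approximation poset adding such a coloring generically, with a $\Delta$-system plus density argument showing it is c.c.c. (or at least that the relevant statements are forced).

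Second, granting the existence of $K_{2n}$ with the stated separation property, the structural consequences in the ``Moreover'' and ``In particular'' clauses follow by general nonsense. For the \emph{hereditary density} claims: I would show $K_{2n}^n$ is hereditarily separable by a reflection/elementary-submodel argument using the c.c.c.-ness and the finitary nature of the generic object (a left-separated sequence of length $\omega_1$ in $K_{2n}^n$ would produce, via the $n$ coordinates, a $(2n)$-tuple pattern that the coloring forbids from occurring uncountably often in a ``one-sided'' way — hence it gives a semibiorthogonal sequence of the excluded type, contradiction). Conversely $K_{2n}^{n+1}$ has an uncountable discrete (indeed left-separated, hence spread $>\omega$) subspace: the $\omega_1$ distinguished points, spread over $n+1$ coordinates, realize exactly the $2n$-supported biorthogonal system we built, and $2(n{+}1)\ge 2n{+}1 > 2n$ coordinates carry enough room to separate — the diagonal-type sequence $(x_\xi,\dots,x_\xi)$ or a staircase of the $x_\xi$'s is discrete because the witnessing clopen sets in the product separate each point from the rest.

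Third, for irredundance: if $C(K_{2n})$ (or $Clop(K_{2n})$) had an uncountable irredundant family $(f_\xi)_{\xi<\omega_1}$, then by the Stone--Weierstrass argument recalled in the introduction it would be the $C(K)$-part of a \emph{nice} biorthogonal system, i.e.\ one whose functionals are $\delta_{x_\xi}-\delta_{y_\xi}$, hence $2$-supported; since $2\le 2n-1$ for $n\ge 2$, this contradicts the non-existence of uncountable $(2n-1)$-supported semibiorthogonal sequences. (In the Boolean case one uses that an irredundant family of clopen sets yields such a nice system directly.) Finally, the last sentence is the instance $n=2$: $C(K_4)$ has an uncountable $4$-supported biorthogonal system by the first part, but a nice biorthogonal system is $2$-supported, hence $3$-supported, hence excluded — so $C(K_4)$ has an uncountable biorthogonal system and no uncountable nice one. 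The main obstacle throughout is the first step: designing the forcing and the $2n$-dimensional combinatorics so that the ``$2n-1$ is too few, $2n$ is enough'' threshold is sharp and robust under the c.c.c.\ chain condition analysis; everything after that is a matter of carefully translating the combinatorial threshold into the topological and algebraic language above.
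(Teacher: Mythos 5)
Your overall architecture is the same as the paper's (a c.c.c.\ finite-condition forcing adding a combinatorial object on $\omega_1$, a $\Delta$-system/amalgamation argument yielding a generic homogeneity statement, and then the irredundance and nice-system claims derived from the observation that a nice biorthogonal system is $2$-supported and $2\le 2n-1$ --- that last paragraph of yours is correct and is exactly how the paper concludes). But the proposal has a genuine gap at what you yourself call the combinatorial heart: you never specify the mechanism that makes the threshold sit exactly at $2n$ versus $2n-1$. In the paper this mechanism is concrete: $K_{2n}$ is an \emph{unordered $2n$-split Cantor set}, obtained by splitting each point $x_\xi$ of an uncountable subset of $2^\omega$ into $2n$ points $(x_\xi,1),\dots,(x_\xi,2n)$ with splitting clopen sets $A_{\xi,1},\dots,A_{\xi,2n}$, subject to a parity-balance condition: for $\eta\ne\xi$, each $A_{\xi,j}$ contains equally many odd-indexed and even-indexed points of $\{(x_\eta,i):i\in[2n]\}$. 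The positive half then comes from $\mu_\xi=\sum_k(\delta_{(x_\xi,2k)}-\delta_{(x_\xi,2k-1)})$ paired with $\chi_{A_{\xi,2n}}$, and the negative half comes from a quantitative lemma about $2n$ reals with small sum, one large entry and --- crucially, because a $(2n-1)$-supported measure must miss one of the $2n$ split points --- one zero entry, which forces a pair of opposite parity with very negative sum; the generic homogeneity then places exactly that pair inside the relevant clopen set and produces $\int f_\beta\,d\mu_\alpha<0$, refuting semibiorthogonality. Your ``return configuration'' does not identify any of this, so the proposal does not establish the sharpness of the threshold.

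The two concrete claims you do make for the ``Moreover'' clause also do not hold up as stated. For the discrete subspace of $K_{2n}^{n+1}$, the diagonal $(x_\xi,\dots,x_\xi)$ (or the same split point repeated) is not discrete for any evident reason; the paper must use the specific tuple $((x_\xi,1),(x_\xi,2),(x_\xi,4),\dots,(x_\xi,2n))$ --- one odd index and all $n$ even indices --- so that the parity-balance condition plus disjointness of the $A_{\xi,j}$'s yields a pigeonhole contradiction if some $(x_\eta,\cdot)$-tuple landed in $A_{\xi,1}\times A_{\xi,2}\times A_{\xi,4}\times\dots\times A_{\xi,2n}$. For hereditary separability of $K_{2n}^n$, your proposed reduction of a left-separated sequence to ``a semibiorthogonal sequence of the excluded type'' is not justified and is not how the paper argues; the paper instead applies the generic homogeneity theorem directly, using that $n$ coordinates pin down at most $n$ of the $2n$ indices, so the remaining indices can be matched by an opposite-parity bijection to build an admissible (balanced) pattern $\epsilon$ forcing $y_\alpha\in U_\beta$ for some $\alpha<\beta$. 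You would need to supply both of these arguments, and above all the parity-balance design, for the proof to go through.
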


Such a situation suggests many questions about the size
of biorthogonal systems of various types in $C(K)$ spaces as well as
in general Banach spaces. These more general discussions will
appear elsewhere. In particular, we are unable to obtain $K$'s such that
$C(K)$ contains biorthogonal systems whose functionals are $2n+1$-supported but does not
contain one whose functionals are $2n$-supported. 
The reason why some fundamental change with the approach
must be taken to obtain  such a space is shown in Lemma \ref{lemmathree}.

On the other hand, if $n=1$ one has absolute results. 
If $K$ is the split interval, then $K$ is hereditarily separable and so cannot
have an uncountable semibiorthogonal system whose functionals are $1$-supported, 
but $C(K)$ has an uncountable biorthogonal system
(see \cite{godefroy}).

It seems that our compact space is the first example showing that the
hereditary density or spread of finite powers of a compact space may
change its value from countable to uncountable arbitrarily high in $\N$.
Such an example can be only consistent since, for example, under MA$+\neg$CH
if $K^3$ is  hereditarily separable for a compact $K$, then it is metrizable,
and so, all finite powers are hereditarily separable. This
follows from the fact that then there are no compact $S$-spaces  (\cite{sspaces}), from
the Katetov theorem (\cite{katetov}) and from the fact that Lindel\"of regular spaces are normal.

The paper is organized as follows: in the following second
section we discuss
a general form of the compact spaces we construct and call
them unordered $N$-split Cantor sets. They are versions
of the split interval whose relation with biorthogonal systems 
in Banach spaces was already demonstrated in \cite{godefroy}.
Section 3 is devoted to a generic construction of  Boolean algebras
whose Stone spaces are the $K_{2n}$'s. This is the only section that requires
the knowledge of forcing.
The partial order used is a new modification
of that of \cite{piotrrolewicz}, which produced nonseparable
$C(K)$'s with no uncountable semibiorthogonal sequences. Thus
our spaces are quite controllable members of  the  group of compact spaces
constructed in \cite{bellginsburgstevo}, \cite{shelah1}, \cite{shelah2},
\cite{piotrrolewicz}. In this section we also prove the existence
of an uncountable discrete subspace of $K_{2n}^{n+1}$ and
an uncountable biorthogonal system in $C(K)$ whose functionals are $2n$-supported.
The section ends with Theorem \ref{generictheorem}, which
expresses the random character of the constructed compact space.
Later on we use this theorem to prove further properties of the
space. Hence a reader not familiar with forcing may use this theorem
for other purposes and read only the following section.

The last, fourth section is devoted to applications of Theorem 
\ref{generictheorem}, that is the proof that  $K_{2n}^n$ is
hereditarily separable and that $C(K_{2n})$ has no uncountable
semibiorthogonal sequences whose functionals are $2n-1$-supported.

We use standard notation. In particular
$[n]=\{1, \dots, n\}$ and $n=\{0,..., n-1\}$ for a positive natural number $n$. 
$A^B$ denotes the set of all functions from $B$ into $A$ and so
if $2=\{0,1\}$ we have that $2^\omega$ denotes all infinite sequences
with terms in $\{0,1\}$, while $2^n$ stands for functions from $n$ into $\{0,1\}$;
also $2^{<\omega}=\bigcup\{2^n:n\in \N\}$.
$\langle s\rangle=\{x\in 2^\omega:\ s\subseteq x\}$ for $s\in 2^n$ for some $n\in \N$.
If $A, B$ are sets of ordinals, then
$A<B$ means that $\alpha<\beta$ for any $\alpha\in A$ and any $\beta\in B$.

\section{Unordered $N$-split Cantor sets}

Fix a sequence of distinct elements ${\mathcal X}=\{x_\xi:\xi<\omega_1\}\subseteq 2^\omega$ and $N\in\N$.
Let
$$K_N=(2^\omega\setminus\mathcal X)
\cup ({\mathcal X}\times [N])$$
and define
$$V_s=(\langle s\rangle \cap(2^\omega\setminus{\mathcal X}))\cup( (\langle s\rangle \cap{\mathcal X})
\times [N]).$$

\begin{definition}\label{splittingFamily}
A family $(A_{\xi,i}: \xi<\omega_1, i\in [N])$ of subsets
of $K_N$ is called an $N$-splitting family if it satisfies the following conditions:
\begin{enumerate}
\item $(x_\xi,i)\in A_{\xi,i}\subseteq K_N$ for each $\xi<\omega_1$ and $i\in [N]$;
\item  for each $\xi<\omega_1$ the sets $A_{\xi,i}$'s are pairwise disjoint;
\item  for each $\xi<\omega_1$ we have $K_N=A_{\xi,1}\cup...\cup A_{\xi,N}$;
\item if  $\eta<\xi$, then there is $k\in \N$ 
and $ j\in[N]$ such that
$A_{\eta,i}\cap V_{x_\eta|k}\subseteq A_{\xi,j}\cap V_{x_\eta|k}$;
\item if $\eta > \xi$ and $x=x_\eta$
or $x\in  2^\omega\setminus{\mathcal X}$, then there is $k\in \N$ and $j\in[N]$  such that
$V_{x|k}\subseteq A_{\xi,j}$.
\end{enumerate}
\end{definition}

\begin{definition} 
Given an $N$-splitting family $(A_{\xi,i}: \xi<\omega_1, i\in [N])$, 
we call the space $(K_N,{\mathcal T})$ an unordered $N$-split Cantor
set if the topology
$\mathcal T$  on  $K_N$ is defined  by 
indicating neighbourhood bases ${\mathcal B}_x$ at
$x$ for every $x\in K_N$ in the following way:  if
$x\in 2^\omega\setminus{\mathcal X},$ 
then  $${\mathcal B}_x=\{V_s: s\subseteq x\}$$ and if 
 $x=(x_\xi, j)\in K_N$, then
$${\mathcal B}_x=\{V_s\cap A_{\xi,j}: s\subseteq x_\xi\}.$$
\end{definition}

The intuitive meaning of the above definitions
is as follows: each point $x_\xi$ of $2^\omega$ is split into $N$ points
$(x_\xi, 1),..., (x_\xi, N)$. If we view $K_N$ as constructed inductively,
when at step $\xi<\omega_1$ we construct the splitting clopen neighbourhoods
$A_{\xi,1},..., A_{\xi, N}$ of the
points $(x_\xi, 1),..., (x_\xi, N)$, then these neighbourhoods split only
$x_\xi$ and no other  previously constructed $(x_\eta, i)$ for $\eta<\xi$
(condition \ref{splittingFamily}.(4)) nor $x_\eta$ for $\eta>\xi$ nor $x\in 2^\omega\setminus{\mathcal X}$ (condition \ref{splittingFamily}.(5)).
Note that, on the other hand, $A_{\xi, i}$'s may split $x_\eta$ for $\eta<\xi$, and in this case, by
condition \ref{splittingFamily}.(4), they do it ``the same way" as the $A_{\eta,j}$'s.

\begin{proposition}\label{splitCantor}
Let $N\in \N$. If $(A_{\xi,i}: \xi<\omega_1, i\in [N])$ is an $N$-splitting family,
then the corresponding unordered $N$-split Cantor set is a compact, Hausdorff,
totally disconnected topological space.
\end{proposition}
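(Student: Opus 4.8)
The plan is to verify the three topological properties in turn, checking first that the prescribed neighbourhood bases $\mathcal{B}_x$ genuinely define a topology, then establishing Hausdorffness, total disconnectedness, and finally compactness.

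First I would observe that the family $\{V_s : s \in 2^{<\omega}\}$, together with the sets $A_{\xi,i}$, generates a legitimate topology via the standard neighbourhood-basis criterion: for each $x$, $\mathcal{B}_x$ is nonempty (it contains $V_\emptyset = K_N$, or $V_\emptyset \cap A_{\xi,j} = A_{\xi,j}$), downward-directed (since the $V_s$ are linearly ordered by reverse inclusion along any branch $x$ or $x_\xi$), and every member of $\mathcal{B}_x$ contains $x$; moreover if $y \in V_s$ with $s \subseteq x$ and $y \notin \mathcal{X}$, then $s \subseteq y$ so $V_s \in \mathcal{B}_y$, and if $y = (x_\eta, j) \in V_s$ then $s \subseteq x_\eta$ so $V_s \cap A_{\eta,j} \in \mathcal{B}_y$ and is contained in $V_s$. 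This gives a well-defined topology in which every basic neighbourhood is in fact \emph{clopen}: $V_s$ is clopen in $2^\omega$ and its preimage-type definition makes it clopen in $K_N$, while $A_{\xi,j}$ is clopen because, by conditions (2) and (3), $K_N \setminus A_{\xi,j} = \bigcup_{i \neq j} A_{\xi,i}$, and each point $y$ of $K_N$ has a basic neighbourhood contained in a single $A_{\xi,i}$ — for $y \notin \mathcal{X}$ and for $y = x_\eta$ with $\eta > \xi$ this is condition (5), for $y = (x_\eta, i)$ with $\eta \le \xi$ this is condition (4) (using that $A_{\xi,i}$'s refine $A_{\eta,\cdot}$ below $V_{x_\eta|k}$), and for $\eta = \xi$ it is immediate. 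Hence $K_N$ has a basis of clopen sets, giving total disconnectedness.

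For Hausdorffness, take distinct $x, y \in K_N$. If both lie in $2^\omega \setminus \mathcal{X}$, or more generally if their underlying points in $2^\omega$ differ, pick $k$ with $x|k \neq y|k$ (reading $(x_\xi,j)$ as having underlying point $x_\xi$); then $V_{x|k}$ and $V_{y|k}$ are disjoint basic neighbourhoods. The only remaining case is $x = (x_\xi, i)$ and $y = (x_\xi, j)$ with $i \neq j$: here $A_{\xi,i}$ and $A_{\xi,j}$ are disjoint clopen neighbourhoods by condition (2). So the space is Hausdorff.

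The main obstacle is compactness. The natural approach is to exhibit $K_N$ as a continuous image, or better a quotient/limit, of $2^\omega \times [N]$ (or to reduce to an inverse-limit of finite spaces), but the cleanest route is a direct open-cover argument exploiting compactness of $2^\omega$. Given an open cover of $K_N$, refine it to a cover by basic clopen sets; by compactness of $2^\omega$ and the fact that the $V_s$ restrict to a base of $2^\omega$ away from $\mathcal{X}$, one extracts a finite subcover of $2^\omega \setminus \mathcal{X}$ — but one must be careful that finitely many $V_s$'s cover all of $2^\omega$ except possibly finitely many split points $x_{\xi_1}, \dots, x_{\xi_m}$, and then cover each fibre $\{(x_{\xi_\ell}, 1), \dots, (x_{\xi_\ell}, N)\}$, a finite set, by finitely many further basic sets. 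The delicate point is that a basic set $V_s$ (with $s$ not below any relevant $x_\xi$) already swallows whole fibres via condition (5)-type behaviour, so one has to track which of the countably many split points are \emph{not} yet covered after choosing a finite subcover of the Cantor-set part; a compactness/König's-lemma argument on the tree $2^{<\omega}$ shows only finitely many branches $x_\xi$ can fail to have an initial segment $s$ with $V_s$ inside a single cover element, and each such fibre is then handled by finitely many $A_{\xi,i}$-type sets. Assembling these finite families yields a finite subcover, completing the proof.
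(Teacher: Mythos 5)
Your verification of the neighbourhood-basis axioms, the Hausdorff property and the existence of a clopen basis is sound and follows essentially the paper's route (you are in fact more explicit about Hausdorffness than the paper is). The gap is in the compactness argument, specifically in the claim that ``a compactness/K\"onig's-lemma argument on the tree $2^{<\omega}$ shows only finitely many branches $x_\xi$ can fail to have an initial segment $s$ with $V_s$ inside a single cover element.'' K\"onig's lemma produces one infinite branch of an infinite, finitely branching tree; it puts no bound on the number of branches. The relevant tree --- the set of $t\in 2^{<\omega}$ no initial segment $s$ of which satisfies $V_s\subseteq U$ for some $U$ in the cover --- has as its branch set a closed subset of $2^\omega$ contained in $\mathcal X$, and closed subsets of $2^\omega$ can certainly be infinite. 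Indeed the claim fails for natural covers when $N\geq 2$: if the only cover elements meeting a fibre $R_\xi$ are the $N$ sets $V_{x_\xi|m_\xi}\cap A_{\xi,i}$, then no cover element contains any $V_s$ with $s\subseteq x_\xi$, since such a $V_s$ contains all of $R_\xi$ whereas $R_\xi\not\subseteq A_{\xi,i}$ by disjointness; choosing the $m_\xi$ and the sets covering the rest of the space with a little care, one arranges infinitely many failing $\xi$, and your plan would then require infinitely many extra sets for the leftover fibres.

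The repair is not to separate a ``Cantor part'' from the fibres at all. Assign to every point of $2^\omega$ a cylinder witnessing finite local coverability of the part of $K_N$ sitting above it: for $x\in 2^\omega\setminus\mathcal X$ take $s_x\subseteq x$ with $V_{s_x}$ inside one cover element, and for each $\xi<\omega_1$ take a single $s_\xi\subseteq x_\xi$ such that $V_{s_\xi}\cap A_{\xi,i}$ lies inside a cover element for every $i\in[N]$ simultaneously; condition (3) of Definition \ref{splittingFamily} then gives $V_{s_\xi}=\bigcup_{i\in[N]}(V_{s_\xi}\cap A_{\xi,i})$, so $V_{s_\xi}$ is covered by at most $N$ cover elements. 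Now $\{\langle s_x\rangle\}\cup\{\langle s_\xi\rangle\}$ is an open cover of the honest Cantor set $2^\omega$, and a finite subcover of it hands you a finite subcover of $K_N$. This is exactly the paper's argument.
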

\begin{proof}
Since $V_\emptyset=K_N$, conditions (1) - (3) of Definition \ref{splittingFamily} imply that $A_{\xi,i}$'s are clopen sets.
Now using Proposition 1.2.3. of \cite{engelking}, we will prove that the above families 
satisfy the axioms for neighbourhood bases BP1-BP3 from \cite{engelking}. The only nontrivial
part is to prove
that given $x\in V\in {\mathcal B}_y$, there is
$U \in {\mathcal B}_x$ such that $x\in U\subseteq V$.

Suppose $x\in 2^\omega\setminus  {\mathcal X}$
and $x\in V_s\in {\mathcal B}_y$. Then $s\subseteq x$ and so $V_s$ itself  is
in ${\mathcal B}_x$. If 
$x\in V_s\cap A_{\xi,i}$, we also have $s\subseteq x$ and
by (5) of Definition \ref{splittingFamily} there is $k\in \N$ such that $V_{x|k}\subseteq A_{\xi,j}$
for some $j\in N$. Put $t=s\cup x|k$ and note that we have that 
$V_{t}\subseteq A_{\xi,j}$, so by the disjointness (condition \ref{splittingFamily}.(2))
we 
have $j=i$ with $x\in V_t\in {\mathcal B}_x$ and
$V_t\subseteq V_s\cap A_{\xi,i}$.

Now suppose that $x=(x_\eta,i)$ and $x\in V_s\in {\mathcal B}_y$, hence
$s\subseteq x$ and so $V_s\cap A_{\eta,i}\in {\mathcal B}_x$ and
$x\in V_s\cap A_{\eta,i} \subseteq V_s$.

 Finally, let
$x=(x_\eta,i)$ and $x\in V_s\cap A_{\xi,j}\in {\mathcal B}_{(x_\xi,j)}$, then $s\subseteq x_\eta$.

First consider $\eta<\xi$, then by (5) of Definition \ref{splittingFamily}
there are $k\in \N$ and $j'$ such that
$A_{\eta,i}\cap V_{x_\eta|k}\subseteq A_{\xi,j'}\cap V_{x_\eta|k}$
and by disjointness (2) we get that $j'= j$. So, if we put
$t=s\cup x_\eta|k$, then
$A_{\eta,i}\cap V_t\subseteq A_{\xi,j}\cap V_t\subseteq
 A_{\xi,j}\cap V_{s}$
and of course $A_{\eta,i}\cap V_t\in {\mathcal B}_{(x_\eta,i)}$.

Secondly if  $\eta\geq \xi$ and
$(x_\eta,i)\in V_s\cap A_{\xi,j}$, we also have $s\subseteq x_\eta$ and
by \ref{splittingFamily}.(4) there are $k\in \N$ and $j'$ such that $V_{x_\eta|k}\subseteq A_{\xi,j'}$
for some $j'$. By the disjointness we 
have $j=j'$. 
If $t=s\cup x_\eta|k$ we have that 
$V_{t}\subseteq A_{\xi,j}$, so $x\in V_t\in {\mathcal B}_x$ and
$V_t\subseteq V_s\cap A_{\xi,i}$. This completes the proof that
${\mathcal B}_x$'s form a local neighbourhood system.

The Hausdorff property is easy since basic sets are clopen.

To prove the compactness, suppose $\mathcal U$ is an open cover of $K_N$.
We may assume that it consists of basic open sets.
For each  $x\in  2^\omega\setminus{\mathcal X}$
define $s_x\in 2^{<\omega}$ such that $x\in V_s\subseteq U\in \mathcal U$
for some $U$, and for each $\xi<\omega_1$ define $s_\xi\in 2^{<\omega}$ such that $(x_\xi,i)\in V_{s_\xi}\cap A_{\xi,i}\subseteq U\in \mathcal U$
for some $U$, and for each $1\leq i\leq N$. This actually gives by (3) of Definition \ref{splittingFamily} that
$V_{s_\xi}$ is covered by finitely many $U\in \mathcal U$.

Now $\{\langle s_x\rangle,\langle s_\xi\rangle: x\in 2^\omega\setminus{\mathcal X},
\xi<\omega_1\}$ forms an open cover of $2^\omega$ which is compact and so
it has a finite subcover, which easily yields a finite subcover of $\mathcal U$.
\end{proof}

\begin{definition}\label{cantornotation} Suppose $N\in \N$ and $K_N$ is an unordered $N$-split Cantor set.
Under the notation as above, we 
define the following:
\begin{itemize}
\item
$R_\xi=\{(x_\xi,1),...,(x_\xi,{N})\}$,
\item  ${\mathcal A}_\alpha$ is  the subalgebra of $Clop(K_N)$
generated by $(V_s: s\in 2^{<\omega})$ and $\{A_{\xi,i}:\xi<\alpha, i \in [N]\}$
for $\alpha\leq\omega_1$.
\item $C_\alpha$
is the closure (in the norm) of
finite linear combinations of characteristic functions
of elements of ${\mathcal A}_\alpha$ inside $C(K)$.

\end{itemize}
\end{definition}

Note that $C_0$ can be naturally identified with
$C(2^\omega)$ inside $C(K)$.

\begin{lemma}\label{lemma3} Let $N\in\N$ and let $K_N$ be an unordered
$N$-split Cantor set.
For every $n\in\N$ and for every $\alpha\in\omega_1$ and every $i\in [N]$
we have
$$A_{\alpha,i} \setminus  V_{x_\alpha|n}\in {\mathcal A}_\alpha.$$
\end{lemma}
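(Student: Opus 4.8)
The plan is to observe that $B:=A_{\alpha,i}\setminus V_{x_\alpha|n}$ is clopen in $K_N$, hence compact, and then to cover it by clopen sets that visibly lie in $\mathcal{A}_\alpha$. Since $A_{\alpha,i}$ and $V_{x_\alpha|n}=V_s$ (for $s=x_\alpha|n$) are clopen by (the proof of) Proposition~\ref{splitCantor}, so is $B$. Thus it suffices to exhibit, for every $y\in B$, a clopen set $W_y\in\mathcal{A}_\alpha$ with $y\in W_y\subseteq B$: finitely many such sets then cover $B$, and their union equals $B$, so $B\in\mathcal{A}_\alpha$.

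First some bookkeeping. Since $x_\alpha\in\mathcal{X}$ and $x_\alpha\in\langle x_\alpha|n\rangle$, the set $R_\alpha$ is contained in $V_{x_\alpha|n}$, so no point of $B$ lies in $R_\alpha$. Moreover, a point $y\notin V_{x_\alpha|n}$ is either some $y\in 2^\omega\setminus\mathcal{X}$ with $y|n\neq x_\alpha|n$, or some $y=(x_\eta,m)$ with $\eta\neq\alpha$ and $x_\eta|n\neq x_\alpha|n$; in either case, any $t$ of length $\geq n$ extending the relevant restriction ($y|n$, resp. $x_\eta|n$) satisfies $V_t\cap V_{x_\alpha|n}=\emptyset$. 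We will always choose witnessing initial segments of length $\geq n$, which is harmless since lengthening $s$ only shrinks $V_s$.

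Now fix $y\in B$ and use the three remaining possibilities. If $y\in 2^\omega\setminus\mathcal{X}$, condition~\ref{splittingFamily}.(5) gives $k\geq n$ and $j\in[N]$ with $V_{y|k}\subseteq A_{\alpha,j}$; as $y\in A_{\alpha,i}$ and the $A_{\alpha,\cdot}$ are pairwise disjoint, $j=i$, and $W_y:=V_{y|k}\in\mathcal{A}_0\subseteq\mathcal{A}_\alpha$ does the job. If $y=(x_\eta,m)$ with $\eta>\alpha$, condition~\ref{splittingFamily}.(5) applied to $x=x_\eta$ gives $k\geq n$ and $j$ with $V_{x_\eta|k}\subseteq A_{\alpha,j}$; again $j=i$ by disjointness and $W_y:=V_{x_\eta|k}$ works. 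If $y=(x_\eta,m)$ with $\eta<\alpha$, condition~\ref{splittingFamily}.(4) (with its index equal to $m$) gives $k\geq n$ and $j$ with $A_{\eta,m}\cap V_{x_\eta|k}\subseteq A_{\alpha,j}\cap V_{x_\eta|k}$; since $(x_\eta,m)\in A_{\eta,m}\cap V_{x_\eta|k}$ by condition~\ref{splittingFamily}.(1), we get $y\in A_{\alpha,j}$, hence $j=i$, and $W_y:=A_{\eta,m}\cap V_{x_\eta|k}\in\mathcal{A}_\alpha$ (using $\eta<\alpha$) works. In each case $W_y\subseteq A_{\alpha,i}$ and $W_y$ is contained in some $V_t$ with $|t|\geq n$ extending $y|n$ or $x_\eta|n$, hence disjoint from $V_{x_\alpha|n}$; so $W_y\subseteq B$, which completes the cover.

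The only real content is the observation that conditions~\ref{splittingFamily}.(4) and~\ref{splittingFamily}.(5) say precisely that away from its splitting point $x_\alpha$ the set $A_{\alpha,i}$ is locally of the form $V_t$ or $A_{\eta,m}\cap V_t$ with $\eta<\alpha$, both already in $\mathcal{A}_\alpha$; the rest is the compactness packaging together with the length-$\geq n$ bookkeeping that keeps the neighbourhoods off $V_{x_\alpha|n}$. I do not anticipate a serious obstacle.
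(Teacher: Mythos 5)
Your proof is correct and follows essentially the same route as the paper: cover the compact set $A_{\alpha,i}\setminus V_{x_\alpha|n}$ (which lies in $K_N\setminus R_\alpha$) by neighbourhoods from $\mathcal{A}_\alpha$ supplied by conditions \ref{splittingFamily}.(4) and (5), and take a finite subcover. Your version merely spells out the case analysis and the length-$\geq n$ bookkeeping that the paper's two-line proof leaves implicit.
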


\begin{proof}  By the properties \ref{splittingFamily}.(4) and (5) of $A_{\xi,i}$'s any point of 
$K_N \setminus R_{\alpha}$ has a neighbourhood
$V$ such that for every $i\in [N]$
it is included in $A_{\alpha,i}$ or disjoint from $A_{\alpha,i}$ 
and moreover $V\in {\mathcal A}_\alpha$.

Since $A_{\alpha,i} \setminus V_{x_\alpha|n}$ is a compact subspace
of $K_N\setminus R_\alpha$, we have a finite subcover
consisting of subsets i.e., $A_{\alpha,i}\setminus V_{x_\alpha|n}$ 
is the supremum of a finite family
of elements of ${\mathcal A}_\alpha$ as required.
\end{proof}

Let us see the general form
of continuous rational simple functions on an unordered $N$-split Cantor set.
By a rational simple function we mean a function
assuming only finitely many rational values.

\begin{lemma}\label{lemmaformoffunction}Suppose that 
$N\in\N$ and that  $K_N$ is an unordered $N$-split
Cantor set, $\varepsilon>0$, $\mu$ is a (regular) Radon measure on $K_N$ and that
$f$ is a continuous rational simple function
on  $K_N$. Then there is
a simple rational function $g\in C(2^\omega)$,
distinct $\xi_1,..., \xi_k<\omega_1$ and
there are rationals $q_{i,l}$, non-negative integers $m_i$
and $s_i\in 2^{m_i}$ such that  $s_i=x_{\xi_i}|m_i$      for $1\leq i\leq k\in
\omega$ and for $1\leq l< N$ such that
$$f=g+\sum_{1\leq i\leq k}\sum_{1\leq l< N}q_{i,l}\chi_{ A_{\xi_{i,l}}\cap V_{s_i}}$$
and such that
$$\sum_{1\leq i\leq k}\max_{1\leq l<N}(|q_{i,l}|)|\mu|(V_{s_i}\setminus R_{\xi_i})\leq \varepsilon.$$
\end{lemma}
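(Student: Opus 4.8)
The plan is to prove the lemma by induction on the number of ``split points'' involved in a finite Boolean combination witnessing that $f$ is $\mathcal A_\alpha$-measurable for some $\alpha<\omega_1$. Since $f$ is a continuous rational simple function, each of its finitely many level sets $f^{-1}(q)$ is clopen, hence lies in $\mathcal A_\alpha$ for some countable $\alpha$; thus $f$ itself lies in $C_\alpha$, and in fact $f$ is a finite rational combination of characteristic functions of elements of the algebra generated by the $V_s$'s and finitely many of the $A_{\xi,i}$'s, say for $\xi$ in a finite set $F=\{\xi_1<\dots<\xi_k\}$. I would first record the normal form of such a function: working in the Boolean algebra, and using disjointness $\bigcup_{l\in[N]}A_{\xi,l}=K_N$ together with Lemma \ref{lemma3} (which says $A_{\xi,l}\setminus V_{x_\xi|n}\in\mathcal A_\xi$), one can write $f=g+\sum_{i=1}^k\sum_{l=1}^{N-1}q_{i,l}\chi_{A_{\xi_i,l}\cap V_{s_i}}$ for suitable $g\in C_0=C(2^\omega)$, rationals $q_{i,l}$, and initial segments $s_i=x_{\xi_i}|m_i$. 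The point of Lemma \ref{lemma3} here is exactly that the ``large'' part $A_{\xi_i,l}\setminus V_{s_i}$ of each splitting set can be absorbed into $g$ and into the lower-index terms, so that only the genuinely non-$\mathcal A_{\xi_i}$-measurable remainder $A_{\xi_i,l}\cap V_{s_i}$ survives; and the $l=N$ term is redundant since $\chi_{A_{\xi_i,N}\cap V_{s_i}}=\chi_{V_{s_i}}-\sum_{l<N}\chi_{A_{\xi_i,l}\cap V_{s_i}}$ and $\chi_{V_{s_i}}\in C_0$.

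The only remaining task is to arrange the smallness estimate $\sum_i\max_{l<N}|q_{i,l}|\,|\mu|(V_{s_i}\setminus R_{\xi_i})\le\varepsilon$ by enlarging the $m_i$, i.e.\ by replacing each $s_i=x_{\xi_i}|m_i$ with a longer initial segment $x_{\xi_i}|m_i'$. For this I would observe that shrinking $s_i$ to a longer initial segment of $x_{\xi_i}$ changes the normal form only by terms that are again absorbable into $g$ and the other summands: precisely, $\chi_{A_{\xi_i,l}\cap V_{x_{\xi_i}|m_i}}-\chi_{A_{\xi_i,l}\cap V_{x_{\xi_i}|m_i'}}=\chi_{A_{\xi_i,l}\cap(V_{x_{\xi_i}|m_i}\setminus V_{x_{\xi_i}|m_i'})}$, and by Lemma \ref{lemma3} the set $A_{\xi_i,l}\setminus V_{x_{\xi_i}|m_i'}$ lies in $\mathcal A_{\xi_i}$, so this difference is $\mathcal A_{\xi_i}$-measurable and can be merged into $g$ together with the already-present $\mathcal A_{\xi_i}$-measurable pieces (after re-expanding in terms of the $V_s$'s, since everything in $\mathcal A_{\xi_i}$ built from lower-index data is a finite combination of $V_s$'s and the $A_{\eta,l}\cap V_{t}$'s with $\eta<\xi_i$, which are already accounted for). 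Hence the coefficients $q_{i,l}$ are unchanged when we lengthen $s_i$.

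Now the estimate follows by regularity: as $m_i'\to\infty$, $V_{x_{\xi_i}|m_i'}\setminus R_{\xi_i}$ decreases to $\bigcap_{m}(V_{x_{\xi_i}|m}\setminus R_{\xi_i})=\emptyset$, since the only points of $K_N$ all of whose neighbourhoods $V_{x_{\xi_i}|m}$ contain them are the $N$ points $(x_{\xi_i},1),\dots,(x_{\xi_i},N)$ of $R_{\xi_i}$, which have been removed. Therefore $|\mu|(V_{x_{\xi_i}|m_i'}\setminus R_{\xi_i})\to0$ as $m_i'\to\infty$ for each fixed $i$, and since there are only finitely many $i$ and the $|q_{i,l}|$ are fixed, we can choose the $m_i'$ large enough that each term $\max_{l<N}|q_{i,l}|\,|\mu|(V_{s_i}\setminus R_{\xi_i})\le\varepsilon/k$. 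I expect the main obstacle to be bookkeeping rather than mathematics: one must be careful in the normal-form step that the absorption of the $\mathcal A_{\xi_i}$-measurable remainders into $g$ really produces an element of $C(2^\omega)$ (i.e.\ an $\mathcal A_0$-combination), which requires iterating the absorption from the largest index $\xi_k$ down to $\xi_1$ so that at each stage Lemma \ref{lemma3} is applied at the correct level; once this ordering is respected, the argument is routine.
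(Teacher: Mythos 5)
Your overall strategy --- peel off the split points of $f$ one at a time, use Lemma \ref{lemma3} to absorb the part of $A_{\xi_i,l}$ lying away from $R_{\xi_i}$ into lower levels, and use regularity of $\mu$ together with $\bigcap_m V_{x_{\xi_i}|m}=R_{\xi_i}$ to make the surviving neighbourhood $|\mu|$-small --- is exactly the paper's. The paper packages it as a transfinite induction on $\xi$ for $f\in C_\xi$, handling one split point per successor step with an $\varepsilon/2$ budget, rather than your two-phase plan of ``normal form first, then shrink all the $s_i$ simultaneously''.

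The one step you should not wave through is the claim that the coefficients are unchanged when you lengthen $s_i$. That is true for $q_{i,l}$ itself, but not for $q_{j,l}$ with $\xi_j<\xi_i$: if $R_{\xi_j}\subseteq V_{s_i}\setminus V_{s_i'}$, then the absorbed difference $\chi_{A_{\xi_i,l}\cap(V_{s_i}\setminus V_{s_i'})}$, restricted to a small $V_{x_{\xi_j}|m}$, equals by condition (4) of Definition \ref{splittingFamily} the sum of the $\chi_{A_{\xi_j,i'}\cap V_{x_{\xi_j}|m}}$ over those $i'$ sent into $A_{\xi_i,l}$ --- a genuinely nonconstant function across $R_{\xi_j}$ --- so restoring the normal form at level $j$ changes $q_{j,l}$. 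Consequently you cannot first fix all the $|q_{i,l}|$ and then pick the $m_i'$ simultaneously with budget $\varepsilon/k$ each: the lower-index coefficients you are estimating against are moving targets. The fix is the ordering you already invoke for the normal-form step: process the indices in decreasing order, choosing $m_i'$ only after all higher-index neighbourhoods have been shrunk and the current value of $q_{i,l}$ is known; this is consistent because lengthening $s_j$ for $\xi_j<\xi_i$ produces a difference lying in ${\mathcal A}_{\xi_j}$, which does not involve $A_{\xi_i,l}$, so the higher-index data is by then stable. Done this way your argument is correct and is, in effect, the paper's induction with $\varepsilon/k$ in place of its $\varepsilon/2$ per step.
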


\begin{proof}  
By induction on $\xi$ we prove that
any continuous simple rational function
in $C_\xi$ can be written in the form as in the lemma. The Stone-Weierstrass theorem and
the uncountable cofinality of $\omega_1$ imply that the union of $C_\xi$'s is
the entire $C(K_N)$.

The limit stage is trivial. So, suppose we are
done for $C_\xi$ and we are given a continuous 
simple rational function $f$ in  $C_{\xi+1}$.
Note that
$$\bigcap_{m\in \N} V_{x_\xi|m}=R_{\xi}.$$
Hence, 
the regularity of the Radon measures implies that 
$|\mu|(V_{x_{\xi}|m} \setminus R_{\xi})$'s converge to $0$.
Let $m_1$ be such that 
$$|\mu|(V_{x_{\xi}|m} \setminus R_{\xi})\leq{\varepsilon\over{4||f||}}$$
for $m\geq m_1$.

Note also that a simple function is a linear combination
of characteristic functions of clopen sets, hence there are 
$\xi_1,..., \xi_{k-1}<\xi<\omega_1$ and $m_2$ such that
preimages under $f$ of each of its finite rational values belong to the
subalgebra of ${\mathcal A}_{\xi+1}$  generated by
$V_t$'s for $|t|<m_2$ and $A_{\xi_1,j},..., A_{\xi_{k-1,j}}, A_{\xi,j}$ for $1\leq j\leq N$.
Now let $n\geq m_1, m_2$ be such that for every
$1\leq i<k$ there is $1\leq j\leq N$ such that  $V_{x_{\xi}|m}\subseteq A_{\xi_i,j}$
which can be 
obtained by the property (5) (of Definition \ref{splittingFamily}) of $A_\xi$'s and $\eta=\xi_i$.

It follows that $f$ is constant on $A_{\xi,j}\cap V_{x_{\xi}|m}$ for every $1\leq j\leq N$.
Let $q_1',..., q_{N}' \in \mathbb{Q}$ be the corresponding
values and note that $|q_l'-q_{N}'|\leq 2||f||$ for any $1\leq l\leq N$. So, by conditions (2) and (3) (of Definition \ref{splittingFamily}) of $A_{\xi,j}$'s
we have
$$f=[f|(K\setminus V_{x_{\xi}|m}) 
+ q_{N}'\chi_{V_{x_{\xi}|m}}]+ \sum_{1\leq l<N}(q_l'-q_{N}') \chi_{A_{\xi,l}\cap V_{x_{\xi}|m}}.$$
Note that $f|(K\setminus V_{x_{\xi}|m})$
belongs to $C_\xi$ by Lemma \ref{lemma3}, and so
$$f=h+\sum_{1\leq l<N}q_l\chi_{A_{\xi,l}\cap V_{x_{\xi}|m}},\ 
\max_{1\leq l<N}|q_l||\mu|(V_{x_{\xi}|m}\setminus R_{\xi})\leq{\varepsilon\over{2}}$$
where $q_l=q_l'-q_{N}'$ and $h\in C_\xi$.
Hence the inductive assumption for $\varepsilon/2$ can be used, which completes the proof
of the lemma.
\end{proof} 

\begin{definition}\label{balancedFamily}
We say that an $N$-splitting family $(A_{\xi,i}: \xi<\omega_1, i\in [N])$ 
is balanced if it satisfies the following additional condition:
\begin{enumerate}[$(1)$]\setcounter{enumi}{5}
\item for all distinct $\xi, \eta \in \omega_1$ and all $j \in [2n]$,
$$|\{i \in \{1, 3, \dots, 2n-1\}: (x_\eta, i) \in A_{\xi,j}\}| = |\{i \in \{2, 4, \dots, 2n\}: (x_\eta, i) \in A_{\xi,j}\}|.$$
\end{enumerate}
\end{definition}

\begin{lemma}\label{property6}
Suppose that $n\in\N$ and that $K_{2n}$ is an unordered ${2n}$-split Cantor set,
where the $N$-splitting family $(A_{\xi,i}: \xi<\omega_1, i\in [2n])$ is balanced.
Then we have that:
\begin{enumerate}[(a)]
\item $K_{2n}^{n+1}$ contains an uncountable discrete subspace;
\item there is an uncountable biorthogonal system in $C(K_{2n})$ 
with $2n$-supported functionals. 
\end{enumerate}
\end{lemma}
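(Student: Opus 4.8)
The plan is to construct, from the balanced $2n$-splitting family, an uncountable set of points in $K_{2n}^{n+1}$ that is discrete in the product topology, and separately an uncountable biorthogonal system in $C(K_{2n})$ whose functionals are supported on $2n$ points. For part (a), the natural candidates for the points of the discrete subspace are the tuples $w_\xi = ((x_\xi,1),(x_\xi,2),\dots,(x_\xi,n+1)) \in K_{2n}^{n+1}$ for $\xi<\omega_1$. To witness discreteness, I want for each $\xi$ a basic open box $\prod_{i=1}^{n+1} U_{\xi,i}$ containing $w_\xi$ and no other $w_\eta$. The obvious choice is $U_{\xi,i} = V_{x_\xi|k}\cap A_{\xi,i}$ for a suitably large $k=k(\xi)$; membership of $w_\xi$ is immediate from condition (1) of Definition \ref{splittingFamily}. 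The point is to rule out $w_\eta$ for $\eta\neq\xi$. If $\eta<\xi$, condition (4) says each $A_{\eta,i}\cap V_{x_\eta|k}$ sits inside a single $A_{\xi,j}\cap V_{x_\eta|k}$, so the coordinates $(x_\eta,1),\dots,(x_\eta,n+1)$ get routed into $A_{\xi,j_1},\dots,A_{\xi,j_{n+1}}$ for some indices $j_1,\dots,j_{n+1}$, and for $w_\eta$ to lie in the box we would need $j_i = i$ for all $i\le n+1$ — but this is where the balanced condition (6) must be exploited: if the $n+1$ odd-or-even-indexed coordinates $(x_\eta,1),\dots,(x_\eta,n+1)$ all landed in distinct $A_{\xi,1},\dots,A_{\xi,n+1}$, then $A_{\xi,1},\dots,A_{\xi,n+1}$ between them absorb at least $\lceil (n+1)/2\rceil$ odd-indexed points and the complementary count of even-indexed points is off, contradicting (6); so at least two of $j_1,\dots,j_{n+1}$ coincide, and the box misses $w_\eta$ after also refining $k$ to separate $x_\eta$ from $x_\xi$ in $2^\omega$. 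If $\eta>\xi$, condition (5) gives some $V_{x_\eta|k}\subseteq A_{\xi,j}$ for a single $j$, so all coordinates of $w_\eta$ are in the same $A_{\xi,j}$, again forcing a collision among the required indices once $n+1>1$; shrinking $k$ so that $V_{x_\xi|k}$ and $V_{x_\eta|k}$ are disjoint in $2^\omega$ finishes this case. A counting argument will be needed to check that for $\xi$ fixed only countably many $\eta$ can fail to be separated by the finitely many candidate values of $k$, but since for each fixed $k$ the set $V_{x_\xi|k}$ is clopen and the distinct points $x_\eta$ accumulate only finitely at $x_\xi$ along any branch, the whole family $\{w_\xi:\xi<\omega_1\}$ can be thinned to an uncountable discrete set (or, more cleanly, one shows outright that every $w_\xi$ has a box isolating it from \emph{all} other $w_\eta$, so no thinning is even needed).

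For part (b), I want to turn the discreteness witnesses into an actual biorthogonal system. For each $\xi$ pick the box data $k(\xi)$ and set $f_\xi = \chi_{V_{x_\xi|k(\xi)}\cap A_{\xi,1}} - \chi_{V_{x_\xi|k(\xi)}\cap A_{\xi,2}}$, a continuous function since both sets are clopen, and let $\mu_\xi = \tfrac12(\delta_{(x_\xi,1)} - \delta_{(x_\xi,2)})$ — wait, that is only $2$-supported; to genuinely use all $2n$ points and to make the orthogonality survive against \emph{all} other indices I instead take $\mu_\xi = \tfrac{1}{n}\sum_{i=1}^{n}\big(\delta_{(x_\xi,2i-1)}-\delta_{(x_\xi,2i)}\big)$, which is atomic with support exactly the $2n$ points of $R_\xi$, and $f_\xi = \sum_{i=1}^{n}\big(\chi_{V_{x_\xi|k(\xi)}\cap A_{\xi,2i-1)}} - \chi_{V_{x_\xi|k(\xi)}\cap A_{\xi,2i}}\big)$. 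Then $\mu_\xi(f_\xi)=1$ by construction. For $\mu_\xi(f_\eta)=0$ with $\eta\neq\xi$: evaluating $\mu_\xi$ at $f_\eta$ amounts to reading off, for each point $(x_\xi,l)\in R_\xi$, which of the clopen pieces $V_{x_\eta|k(\eta)}\cap A_{\eta,m}$ it lies in, with sign $+$ if $m$ is odd and $-$ if $m$ is even; and here exactly condition (6) of Definition \ref{balancedFamily} — for the pair $(\eta,\xi)$ applied with the role of the free variable to $R_\xi$ — guarantees that the odd-indexed and even-indexed contributions cancel in pairs, giving total $0$. One has to be slightly careful that $(x_\xi,l)$ might fail to be in $V_{x_\eta|k(\eta)}$ at all, but then it contributes $0$ on both sides, so the cancellation still goes through on the points that do lie in $V_{x_\eta|k(\eta)}$; and condition (6) as stated is about containment in $A_{\xi,j}$, so one reads it (symmetrically) for the pair with $\eta$'s sets and $\xi$'s points, which is exactly the version needed. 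This yields the biorthogonal system $(f_\xi,\mu_\xi)_{\xi<\omega_1}$ with $2n$-supported functionals.

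The main obstacle I expect is the bookkeeping in the balanced condition: condition (6) is phrased asymmetrically (points $x_\eta$, sets $A_{\xi,j}$), and both parts of the proof need it applied with the indices in \emph{both} orders, as well as combined with conditions (4) and (5), which route points into sets only after passing to a tail $V_{x_\eta|k}$. Making the two applications of (6) line up — one to force an index collision for discreteness, one to force sign cancellation for biorthogonality — while simultaneously keeping track of which tail $k$ is large enough for conditions (4) and (5) to kick in, is the delicate part; everything else is the routine clopen-set calculus already developed in Lemmas \ref{lemma3} and \ref{lemmaformoffunction}. A secondary point worth a sentence: one must check the $f_\xi$ are genuinely in $C(K_{2n})$, but that is immediate since each $V_{x_\xi|k}\cap A_{\xi,i}$ is clopen, hence its characteristic function is continuous, and a finite sum of such is continuous.
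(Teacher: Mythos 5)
Your proposal is correct and is essentially the paper's argument: both parts rest on exactly the same use of the balanced condition (6) together with disjointness --- in (a) a pigeonhole (the $n+1$ pairwise disjoint sets $A_{\xi,1},\dots,A_{\xi,n+1}$ would each meet $R_\eta$ and hence, by (6), each contain at least one odd-indexed point of $R_\eta$, but only $n$ such points exist), and in (b) the odd/even cancellation of $\mu_\xi$ against combinations of characteristic functions of the $A_{\eta,j}$. The paper simply makes tidier choices: the tuple $((x_\xi,1),(x_\xi,2),(x_\xi,4),\dots,(x_\xi,2n))$ with boxes $A_{\xi,1}\times A_{\xi,2}\times A_{\xi,4}\times\dots\times A_{\xi,2n}$ (so no $V_s$ factors and no thinning are needed), and $f_\xi=\chi_{A_{\xi,2n}}$ with $\mu_\xi=\sum_{i=1}^n(\delta_{(x_\xi,2i)}-\delta_{(x_\xi,2i-1)})$, which gives the diagonal value $1$ immediately. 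One slip to fix in your version of (b): with your normalization, $\mu_\xi(f_\xi)=\frac{1}{n}\sum_{i=1}^n\bigl(1-(-1)\bigr)=2$, not $1$, so the factor $\frac{1}{n}$ should be $\frac{1}{2n}$ (or $f_\xi$ rescaled).
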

\begin{proof}
To prove (a), let us show that the subset $\{((x_\xi,1), (x_\xi,2), (x_\xi,4), \dots , (x_\xi, 2n)): \xi < \omega_1\}$  of $K_{2n}^{n+1}$ is relatively discrete.

Let $U_\xi = A_{\xi, 1} \times A_{\xi, 2} \times A_{\xi, 4} \times \dots \times A_{\xi, 2n}$, which clearly is
an open neighbourhood of $((x_\xi,1), (x_\xi,2), (x_\xi,4), \dots , (x_\xi, 2n))$. Now, fix distinct $\xi, \eta < \omega_1$ and let us prove that
$((x_\eta,1), (x_\eta,2), (x_\eta,4), \dots , (x_\eta, 2n)) \notin U_\xi$. 

By contradiction, suppose that $((x_\eta,1), (x_\eta,2), (x_\eta,4), \dots , (x_\eta, 2n)) \in U_\xi$, that
is, $(x_\eta, j) \in A_{\xi,j}$ for each $j =1,2,4, \dots, 2n$. By condition \ref{balancedFamily}.(6), we have that for each $j \in [2n]$, 
$$|\{i \in \{1, 3, \dots, 2n-1\}: (x_\eta, i) \in A_{\xi,j}\}| = |\{i \in \{2, 4, \dots, 2n\}: (x_\eta, i) \in A_{\xi,j}\}|.$$
Hence, each set $A_{\xi, 2}, A_{\xi, 4}, \dots, A_{\xi, 2n}$ must contain at least one of the $(x_\eta, 1), (x_\eta, 3),$ $\dots,(x_\eta, 2n-1)$.
By the disjointness of the $A_{\xi, j}$'s (Property (2) of Definition \ref{splittingFamily}), $(x_\eta, 1)$ has to be in one of the sets
$A_{\xi, 2}, A_{\xi, 4}, \dots, A_{\xi, 2n}$. But by our assumption, $(x_\eta,1) \in A_{\xi, 1}$ and again by the disjointness of the
$A_{\xi,j}$'s, this is a contradiction.

To show (b), for each $\xi<\omega_1$, let $f_\xi = \chi_{A_{\xi,2n}}$ and 
$$\mu_\xi = \sum_{k=1}^n (\delta_{(x_\xi, 2i)} - \delta_{(x_\xi, 2i-1)})$$
and note that $(f_\xi, \mu_\xi)_{\xi <\omega_1} \subseteq C(K_{2n}) \times M(K_{2n})$.
Let us prove that this is a biorthogonal system.

For each $\xi < \omega_1$, since $(x_\xi, i) \in A_{\xi, i}$ and these sets are disjoint (Property (2) of Definition \ref{splittingFamily}), we get that
$$\mu_\xi(f_\xi) = \sum_{k=1}^n (\delta_{(x_\xi, 2k)} - \delta_{(x_\xi, 2k-1)})(\chi_{A_{\xi,2n}})$$ 
$$= \sum_{k=1}^n (\chi_{A_{\xi,2n}}((x_\xi, 2k)) - \chi_{A_{\xi,2n}}((x_\xi, 2k-1)))
= \chi_{A_{\xi,2n}}((x_\xi, 2n)) = 1.$$

On the other hand, for distinct $\xi, \eta < \omega_1$, by Property (6),
we have that for all $j \in [2n]$,
$$|\{i \in \{1, 3, \dots, 2n-1\}: (x_\eta, i) \in A_{\xi,j}\}| = |\{i \in \{2, 4, \dots, 2n\}: (x_\eta, i) \in A_{\xi,j}\}|.$$
Hence,
$$\begin{array}{cl}
\mu_\xi(f_\eta) & = \displaystyle{\sum_{k=1}^n} (\delta_{(x_\xi, 2k)} - \delta_{(x_\xi, 2k-1)})(\chi_{A_{\eta,2n}}) \\
&= \displaystyle{\sum_{k=1}^n} (\chi_{A_{\eta,2n}}((x_\xi, 2k)) - \chi_{A_{\eta,2n}}((x_\xi, 2k-1)))\\
&= \displaystyle{\sum_{k=1}^n} \chi_{A_{\eta,2n}}((x_\xi, 2k)) - \displaystyle{\sum_{k=1}^n} \chi_{A_{\eta,2n}}((x_\xi, 2k-1))\\
& = |\{i \in \{2, 4, \dots, 2n\}: (x_\xi, i) \in  A_{\eta,2n}\}|  - |\{i \in \{1, 3, \dots, 2n-1\}: (x_\xi, i) \in  A_{\eta,2n}\}|\\
& =0,
\end{array}$$
concluding that $(f_\xi, \mu_\xi)_{\xi <\omega_1} \subseteq C(K_{2n}) \times M(K_{2n})$ is a biorthogonal system.
\end{proof}

\section{The generic construction}
This section is devoted to a generic construction of
an unordered $2n$-split Cantor set which exhibits quite
random features. This type of uncountable structures 
was first investigated systematically in \cite{shelah2}.
One can describe this random behavior as: in any
uncountable sequence of finite substructures we have two which are
related as we wish (up to constrains). We fix an
uncountable sequence $(x_\xi:\xi<\omega_1)\subseteq 2^\omega$
consisting of distinct elements.

\begin{definition}
Let $\mathbb{P}$ be the forcing formed by conditions 
$$p=(F_p, n_p, (f^p_\xi: \xi \in F_p)),$$
where:
\begin{enumerate}[1.]
\item $F_p \in [\omega_1]^{<\omega}$;
\item $n_p \in \omega$ is such that for all $\xi\neq \eta$ in $F_p$, $x_\xi|n_p \neq x_\eta|n_p$;
\item for all $\xi \in F_p$, 
$$f^p_\xi: 2^{n_p} \setminus \{x_\xi|n_p\} \rightarrow [2n]^{[2n]} \times [F_p \cap (\xi+1)]$$
is such that 
\begin{enumerate}[a)]
\item if $f^p_\xi(s)=(\varphi, \xi)$, then $\varphi$ is a constant function;
\item if $f^p_\xi(s) = (\varphi, \eta)$ for some $\eta < \xi$, then 
$$\forall j \in [2n] \quad |\varphi^{-1} (j) \cap \{1, 3, 5, \dots, 2n-1\} | = |\varphi^{-1}(j) \cap \{2, 4, \dots, 2n\}|.$$
\end{enumerate}
\end{enumerate}
We put $q \leq p$ if $F_q \supseteq F_p$, $n_q \geq n_p$ and for all $\xi \in F_p$, all $s \in 2^{n_q} \setminus \{x_\xi|n_q\}$ and all $t \in 2^{n_p} \setminus \{x_\xi|n_p\}$,
$$t \subseteq s \Rightarrow f^p_\xi(t) = f^q_\xi(s).$$
\end{definition}

Intuitively, we are,
of course, trying to build a $2n$-split
Cantor set which is determined by the
choice of the balanced $2n$-splitting family formed by $A_{\xi,i}$'s. Thus the
coordinate $f^p_\xi(s)$ describes the behavior of $A_{\xi, i}$'s on $V_s$.
The formal description is the subject of Definition \ref{axis}.
The value $f^p_\xi(s)=(\varphi, \xi)$, where $\varphi$ has to be a constant function,  say equal to $i$, means that the entire $V_s$ is included in
$A_{\xi, i}$. The value $f^p_\xi=(\varphi, \eta)$ for some $\eta<\xi$ means
that $A_{\xi, i}$'s divide $V_s$ as coded by $\varphi$ i.e.,
$A_{\eta,j}\cap V_s\subseteq A_{\xi,\varphi(j)}$ for each $j\in [N]$.
Note that a condition $p\in\mathbb{P}$ does not carry any information
about the behavior of $A_{\xi,i}$'s on $V_{x_\xi|n_p}$, other than
$(x_\xi, i)\in A_{\xi, i}$. This is the 
degree of freedom we have and which can be controlled by passing to
an appropriate extension $q\leq p$. Condition (b) is to guarantee
that the family of $A_{\xi,i}$'s is balanced, that is, that is satisfies
property (6) of Definition \ref{balancedFamily}.

\begin{lemma}\label{lemaDensidade}
The following subsets of $\mathbb{P}$ are dense in $\mathbb{P}$:
\begin{enumerate}[(i)]
\item $\{p \in \mathbb{P}: n_p \geq k\}$, for some fixed $k \in \mathbb{N}$;
\item $\{p \in \mathbb{P}: \xi \in F_p \}$, for some fixed $\xi <\omega_1$.
\end{enumerate}
\end{lemma}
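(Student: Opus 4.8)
The plan is to verify density of the two families by taking an arbitrary condition $p=(F_p,n_p,(f^p_\xi:\xi\in F_p))$ and explicitly building an extension $q\le p$ that lands in the target set. For (i), if $n_p\ge k$ there is nothing to do, so assume $n_p<k$; I would keep $F_q=F_p$ and set $n_q=k'$ where $k'\ge k$ is large enough that the elements of $\{x_\xi|k':\xi\in F_p\}$ are pairwise distinct (possible since the $x_\xi$ are distinct and $F_p$ is finite), thereby preserving clause (2) of the definition of $\mathbb P$. For (ii), if $\xi\in F_p$ there is nothing to do; otherwise I would set $F_q=F_p\cup\{\xi\}$ and again enlarge $n_p$ to some $n_q$ so that all of $\{x_\eta|n_q:\eta\in F_q\}$ are pairwise distinct.

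The substantive point in both cases is to extend the functions $f^p_\eta$ consistently and to define $f^q_\xi$ (in case (ii)). For the existing coordinates $\eta\in F_p$, clause (b) of $q\le p$ forces the value of $f^q_\eta$ on $s\in 2^{n_q}\setminus\{x_\eta|n_q\}$ with $x_\eta|n_p\subseteq s$ to be $f^p_\eta(x_\eta|n_p\!\restriction\text{?})$ — more precisely $f^q_\eta(s)=f^p_\eta(s|n_p)$ whenever $s|n_p\ne x_\eta|n_p$; and for the finitely many $s$ with $s|n_p=x_\eta|n_p$ but $s\ne x_\eta|n_q$ (these are the "new" strings splitting off from $x_\eta|n_p$), I am free to assign any legal value, for instance the constant pair $(\varphi,\eta)$ with $\varphi\equiv 1$, which trivially satisfies condition 3(a). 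One must check this defines a legitimate condition: clause (2) holds by choice of $n_q$, and clause (3) holds coordinatewise by the above. For case (ii), to define $f^q_\xi:2^{n_q}\setminus\{x_\xi|n_q\}\to[2n]^{[2n]}\times[F_q\cap(\xi+1)]$ I can again use the trivial value $(\varphi,\xi)$ with $\varphi$ constant equal to $1$ on every $s$; this satisfies 3(a) vacuously for 3(b), and the target set of the pairing is legitimate since $\xi\in F_q\cap(\xi+1)$. Then $q=(F_q,n_q,(f^q_\eta:\eta\in F_q))\in\mathbb P$, $q\le p$, and $q$ lies in the required dense set.

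I expect no real obstacle here: the only thing to be careful about is bookkeeping with the restriction maps, namely that extending $n_p$ to $n_q$ does not violate clause (2) (handled by the distinctness of the $x_\xi$ and finiteness of $F_p$, or $F_p\cup\{\xi\}$) and that the forced values of $f^q_\eta$ on strings extending $x_\eta|n_p$ agree with $f^p_\eta$ while the genuinely new strings get arbitrary but legal labels. Since condition 3(b) only constrains values of the form $(\varphi,\eta)$ with $\eta<\xi$, and we are always free to use a constant $\varphi$ paired with the diagonal index $\xi$ itself, legality of the new assignments is automatic. Thus both sets are dense, which completes the proof of the lemma.
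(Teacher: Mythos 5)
Your proposal is correct and follows essentially the same route as the paper: extend $n_p$ if necessary, copy $f^p_\eta(s|n_p)$ onto the new strings where that restriction is defined, and assign the trivial legal value $(\varphi,\eta)$ with $\varphi$ constant (which satisfies 3(a)) on the strings splitting off from $x_\eta|n_p$ and on the new coordinate $\xi$ in case (ii). The only superfluous care is your choice of $k'\ge k$ to keep the restrictions $x_\eta|n_q$ distinct: any $n_q\ge n_p$ already preserves clause (2), so the paper simply takes $n_q=k$.
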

\begin{proof}
For $(i)$, fix $k \in \mathbb{N}$ and let $p =(F_p, n_p, (f^p_\xi: \xi \in F_p)) \in \mathbb{P}$. If $n_p<k$, define $q = (F_q, n_q, (f^q_\xi: \xi \in F_q))$
by putting $F_q = F_p$, $n_q = k$ and for each $\xi \in F_q = F_p$, $f^q_\xi$ is any function satisfying condition 3 of the definition of the forcing such that 
$f^q_\xi(t)= f^p_\xi(t|n_p)$, if $t|n_p \in 2^{n_p} \setminus \{x_\xi|n_p\}$; for example, let
$$f^q_\xi(t) = \left\{\begin{array}{cl}
f^p_\xi(t|n_p)& \text{if }t|n_p \in 2^{n_p} \setminus \{x_\xi|n_p\},\\
(\varphi, \xi) & \text{otherwise,}
\end{array}\right.$$
where $\varphi$ is the constant function equal to $1$. It is easy to see that $q \in \mathbb{P}$ and $q \leq p$.

For $(ii)$, fix $\xi<\omega_1$ and let $p =(F_p, n_p, (f^p_\xi: \xi \in F_p)) \in \mathbb{P}$. By $(i)$, we may assume that $n_p$ is such that
$x_\eta|n_p \neq x_\xi|n_p$ for all $\eta \in F_p$. Define $q = (F_q, n_q, (f^q_\xi: \xi \in F_q))$
by putting $F_q = F_p\cup \{\xi\}$, $n_q = n_p$, $f^q_\eta = f^p_\eta$ for each $\eta \in F_p$, and
$f^q_\xi$ is any function satisfying condition 3 of the definition of the forcing;  for example, let
$f^q_\xi(t) = (\varphi, \xi)$, where $\varphi$ is the constant function equal to $1$. It is easy to see that $q \in \mathbb{P}$ and $q \leq p$.
\end{proof}

\begin{definition}\label{axis}
Given a $\mathbb{P}$-generic filter $G$ over a model $V$, we define the family $\{A_{\xi, j}: \xi \in \omega_1, j \in [2n]\}$ as follows:
for each $\xi \in \omega_1$ and each $j \in [2n]$, let
$$\begin{array}{ll}
A_{\xi, j} = & \{(x_\xi, j)\} \cup \bigcup \{V_s: \exists p \in G, f^p_\xi(s) = (\varphi, \xi) \text{ and } \varphi \text{ is the constant function equal to }j\}\\
& \cup \bigcup \{V_s \cap A_{\eta, i}: \exists p \in G, f^p_\xi(s) = (\varphi, \eta), \text{ for some $\eta \neq \xi$ and } \varphi(i)=j\}.
\end{array}$$
\end{definition}

The following lemma follows directly from the above definition.

\begin{lemma}\label{PropBas}
Given $p \in G$, $\xi \in F_p$ and $s \in 2^{n_p} \setminus \{x_\xi|n_p\}$, we have that:
\begin{enumerate}[(a)]
\item if $f^p_\xi(s) = (\varphi, \xi)$, then $V_s \subseteq A_{\xi, j}$ for $j = \varphi(1)$;
\item if $f^p_\xi(s) = (\varphi, \eta)$ for some $\eta < \xi$, then $\forall i \in [2n]$, $V_s \cap A_{\eta, i} \subseteq A_{\xi, \varphi(i)}$. $\square$
\end{enumerate}
\end{lemma}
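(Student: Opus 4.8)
The plan is to read both assertions directly off Definition \ref{axis}, which is just the usual ``everything forced by the generic is realized'' description of a generic object. Before doing so, I would record the one point that deserves a moment's thought: Definition \ref{axis} is a recursion on $\xi$, yet the right-hand side of the defining equation for $A_{\xi,j}$ mentions sets $A_{\eta,i}$ with $\eta\neq\xi$; this is legitimate because whenever $f^p_\xi(s)=(\varphi,\eta)$, clause 3 of the definition of $\mathbb{P}$ forces the second coordinate to lie in $F_p\cap(\xi+1)$, so $\eta\neq\xi$ actually gives $\eta<\xi$. Hence the recursion is well-founded and, in the defining equation for $A_{\xi,j}$, every set $A_{\eta,i}$ occurring on the right has already been defined.

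For part (a), assume $p\in G$, $\xi\in F_p$, $s\in 2^{n_p}\setminus\{x_\xi|n_p\}$ and $f^p_\xi(s)=(\varphi,\xi)$. By clause 3.a) of the definition of $\mathbb{P}$, $\varphi$ is constant, so it takes the single value $j=\varphi(1)$. Then the pair $(p,s)$ is precisely a witness of the form appearing in the second union in the definition of $A_{\xi,j}$, namely ``$\exists p\in G$, $f^p_\xi(s)=(\varphi,\xi)$ and $\varphi$ is the constant function equal to $j$''; therefore $V_s\subseteq A_{\xi,j}$. For part (b), assume instead $f^p_\xi(s)=(\varphi,\eta)$ with $\eta<\xi$, and fix $i\in[2n]$. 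Then $(p,s)$ witnesses in the third union in the definition of $A_{\xi,\varphi(i)}$ (with this $\eta$ and this $i$) that $V_s\cap A_{\eta,i}\subseteq A_{\xi,\varphi(i)}$, which is exactly the claim.

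There is essentially no obstacle here; the only things meriting care are the well-foundedness of the recursion noted above and, implicitly, the fact that $G$ being a filter makes the various pieces of the union over $p\in G$ mutually consistent (so that, e.g., the $A_{\xi,j}$ for fixed $\xi$ end up pairwise disjoint). That consistency, however, is not needed for the two containments asserted in this lemma: each of them holds already for the single witnessing condition $p$, independently of what other conditions in $G$ may say. I would therefore present the proof as a two-line unwinding of Definition \ref{axis}, preceded by the remark pinning down $\eta<\xi$.
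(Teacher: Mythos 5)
Your proof is correct and is exactly the argument the paper has in mind: the paper states only that the lemma ``follows directly from the above definition,'' and your two-line unwinding of Definition \ref{axis} (with $p$ itself serving as the witness in the relevant union, and the observation that clause 3 of the forcing forces $\eta\le\xi$, hence $\eta<\xi$, so the recursion is well-founded) is precisely that direct verification.
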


Notice that in case $f^p_\xi(s) = (\varphi, \xi)$,  $\varphi$ is the constant function equal to $j$, 
so that we could have taken $j = \varphi(i)$ for any $i \in [2n]$.

Let us now check that the family $\{A_{\xi, j}: \xi \in \omega_1, j \in [2n]\}$ has the desired properties.

\begin{theorem}\label{listaPropriedades}
The family $\{A_{\xi, j}: \xi \in \omega_1, j \in [2n]\}$ is a balanced $2n$-splitting family.
\end{theorem}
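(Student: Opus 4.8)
The plan is to verify each of the six conditions from Definitions \ref{splittingFamily} and \ref{balancedFamily} directly from Definition \ref{axis}, using the density lemmas \ref{lemaDensidade} and the basic behaviour recorded in Lemma \ref{PropBas}, together with a straightforward induction on $\xi<\omega_1$.

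First I would set up the induction: conditions (1), (2), (3) and (6) will be proved simultaneously by induction on $\xi$, since the definition of $A_{\xi,j}$ refers back to the sets $A_{\eta,i}$ for $\eta<\xi$. For condition (1), that $(x_\xi,j)\in A_{\xi,j}$ is immediate from the first clause of Definition \ref{axis}; the inclusion $A_{\xi,j}\subseteq K_{2n}$ is clear. For condition (3), $K_{2n}=\bigcup_{j}A_{\xi,j}$: given $y\in K_{2n}$, pick (using Lemma \ref{lemaDensidade}(ii) and (i)) a condition $p\in G$ with $\xi\in F_p$ and $n_p$ large enough that $y$ lies in some $V_s$ with $s\in 2^{n_p}$, $s\ne x_\xi|n_p$ (if $y\in R_\xi$ itself, use clause one of the definition); then the value $f^p_\xi(s)$ is defined, and by Lemma \ref{PropBas} $y$ lands in $A_{\xi,\varphi(i)}$ for the appropriate $i$ (using the inductive surjectivity of the $A_{\eta,i}$'s when $f^p_\xi(s)=(\varphi,\eta)$, $\eta<\xi$). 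For disjointness (condition (2)), suppose $y\in A_{\xi,j}\cap A_{\xi,j'}$; trace $y$ back through Definition \ref{axis} to a witnessing $V_s$ (or $V_s\cap A_{\eta,i}$) for each of $j,j'$, use compatibility of conditions in $G$ and the coherence clause $t\subseteq s\Rightarrow f^p_\xi(t)=f^q_\xi(s)$ in the order to reduce to a single condition $p$, and then either $\varphi$ constant forces $j=j'$, or the inductive disjointness of the $A_{\eta,i}$'s forces it. Condition (6) is the real content of ``balanced'': fix distinct $\xi,\eta$; I would find $p\in G$ with both in $F_p$ and $n_p$ large enough that $(x_\eta,i)$ (for every $i\in[2n]$ simultaneously) lies in the single set $V_s$ with $s=x_\eta|n_p$, $s\ne x_\xi|n_p$; then $(x_\eta,i)\in A_{\xi,j}$ iff, reading off $f^p_\xi(s)=(\varphi,\zeta)$, we have $\varphi(\text{something})=j$ — when $\zeta=\xi$, $\varphi$ is constant, so the condition $|\{\text{odd }i\}|=|\{\text{even }i\}|$ is trivial (all $2n$ of the $(x_\eta,i)$ go into the same $A_{\xi,j}$, $n$ odd and $n$ even); when $\zeta=\eta$, clause 3(b) of the forcing definition says exactly $|\varphi^{-1}(j)\cap\{1,3,\dots\}|=|\varphi^{-1}(j)\cap\{2,4,\dots\}|$, and since for $i$ odd $(x_\eta,i)\in A_{\xi,j}$ iff $i\in\varphi^{-1}(j)$ (using $(x_\eta,i)\in A_{\eta,i}$ and Lemma \ref{PropBas}(b)), this is precisely condition (6); the remaining case is $\eta<\xi$ but $\zeta$ equals some third index or $\zeta<\eta$, which is handled by unwinding $A_{\zeta,i}$ one more level and invoking the inductive form of (6) for the pair $(\zeta,\eta)$.

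For conditions (4) and (5), which are about the interaction of $A_{\xi,i}$'s with $A_{\eta,j}$'s and with $V_s$'s, I would argue as follows. For (4) (the case $\eta<\xi$): by genericity choose $p\in G$ with $\eta,\xi\in F_p$; then $s:=x_\eta|n_p$ satisfies $s\ne x_\xi|n_p$ (by clause 2 of the definition of a condition), so $f^p_\xi(s)=(\varphi,\zeta)$ is defined, and Lemma \ref{PropBas}(b) (or the constant case) gives, with $k=n_p$, that $A_{\eta,i}\cap V_{x_\eta|k}\subseteq A_{\xi,\varphi(i)}\cap V_{x_\eta|k}$ — so $j=\varphi(i)$ works. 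For (5) (the case $\eta>\xi$ with $x=x_\eta$, or $x\in 2^\omega\setminus\mathcal X$): again take $p\in G$ with $\xi\in F_p$ (density lemma (ii)) and, using density lemma (i) plus the fact that $x\ne x_\xi$ for these $x$ (when $x=x_\eta$ with $\eta>\xi$ they are distinct; when $x\notin\mathcal X$ it is not any $x_\zeta$), choose $n_p$ large enough that $x\in V_s$ with $s=x|n_p\in 2^{n_p}\setminus\{x_\xi|n_p\}$; then $f^p_\xi(s)$ is defined and clause (a) of Lemma \ref{PropBas} (or clause (b) together with clause 3 of Definition \ref{axis}, noting $V_s=\bigcup_i V_s\cap A_{\zeta,i}\subseteq\bigcup_i A_{\xi,\varphi(i)}$ — but we need a single $j$, so in the $\zeta$-case we'd instead go further down and use property (5) inductively for $(\xi,\zeta)$ or observe that we may first extend to a condition where $f^p_\xi(s)=(\varphi,\xi)$ with $\varphi$ constant) yields a single $j$ with $V_{x|k}\subseteq A_{\xi,j}$ for $k=n_p$. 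Actually the cleanest route for (5) is to use density: the set of $p\in G$ for which there is $n$ and $j$ with $f^p_\xi(x|n)=(\varphi,\xi)$, $\varphi\equiv j$, is dense below any condition containing $\xi$ (one can always further specify the value on a basic set disjoint from $x_\xi|n_p$), so by genericity such a $p$ is in $G$ and then $V_{x|n_p}\subseteq A_{\xi,j}$.

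The main obstacle I expect is the disjointness and balancedness bookkeeping in the recursive clause of Definition \ref{axis}: since $A_{\xi,j}$ is defined in terms of the $A_{\eta,i}$ with $\eta<\xi$, one must be careful that the three clauses of the definition do not overlap inconsistently, and that a point $y$ genuinely ends up in exactly one $A_{\xi,j}$ — this is where compatibility of conditions in the generic filter $G$ and the coherence requirement ``$t\subseteq s\Rightarrow f^p_\xi(t)=f^q_\xi(s)$'' in the forcing order do the work, letting one always retreat to a single condition $p\in G$ that decides everything relevant. Everything else is a routine, if somewhat lengthy, unwinding of the definitions. I would organize the proof as: (Step 1) conditions (1) and (3); (Step 2) condition (2) by induction; (Step 3) condition (6) by induction, reducing to clause 3(b) of the forcing; (Step 4) conditions (4) and (5) via the density lemmas.
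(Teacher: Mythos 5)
Your proposal follows essentially the same route as the paper: induction on $\xi$ for (1)--(3) and (6), with (2) reduced via compatibility in $G$ and the coherence clause to a single condition, (6) split into the constant case, the $\zeta=\eta$ case (clause 3(b) of the forcing), and the $\zeta\neq\eta$ case (inductive hypothesis for the pair $(\zeta,\eta)$), and (4) read off directly from a condition containing both $\eta$ and $\xi$.

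One concrete warning about condition (5): the ``cleanest route'' you propose at the end --- that the set of $p\in G$ with $f^p_\xi(x|n)=(\varphi,\xi)$, $\varphi$ constant, is dense below any condition containing $\xi$ --- is false. By the coherence clause in the definition of the order, once some $p\in G$ has $x|n_p$ in the domain of $f^p_\xi$, every extension $q$ satisfies $f^q_\xi(x|n_q)=f^p_\xi(x|n_p)$; so if that value is of the form $(\varphi,\eta')$ with $\eta'<\xi$, no extension can replace it by a constant-type value. (The only genuine freedom is on refinements of the branch $x_\xi|n_p$ that have not yet split off from $x_\xi$.) You must therefore use the first route you sketch, which is the paper's: when $f^p_\xi(x|n_p)=(\varphi,\eta')$ with $\eta'<\xi$, apply the inductive hypothesis of (5) to $\eta'$ to get $V_{x|k}\subseteq A_{\eta',i}$, then pass to $q\leq p$ with $n_q\geq k$ and use coherence plus Lemma \ref{PropBas}(b) to conclude $V_{x|n_q}\subseteq A_{\xi,\varphi(i)}$; note in particular that $k=n_p$ need not suffice in this case. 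With that correction the argument is complete and matches the paper's.
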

\begin{proof}
Let us prove that the family satisfies conditions \ref{splittingFamily}.(1) - (5) and \ref{balancedFamily}.(6). 

\noindent \textbf{(1).} It follows directly from the definition of $A_{\xi, j}$.

\noindent \textbf{(2).} We prove it by induction on $\xi$. First notice that by the definition of the forcing $\mathbb{P}$, 
$$\forall p \in \mathbb{P} \ \forall \xi \in F_p \ \forall s \in dom f^p_\xi \quad R_\xi\cap V_s=\emptyset,$$
since $x_\xi|n_p \notin dom f^p_\xi$. Then, $(x_\xi, j_1) \in A_{\xi, j_2}$ iff $j_1=j_2$. 

Now, fix $\xi < \omega_1$ and suppose we have that $A_{\eta, i}$ are pairwise disjoint for each fixed $\eta<\xi$.
Suppose there is $x \in A_{\xi, j_1} \cap A_{\xi, j_2}$ for some distinct $j_1, j_2 \in [2n]$. By the above observation,
$x \neq (x_\xi, j)$ for any $j \in [2n]$.

By the definition of $A_{\xi, j_k}$, for each $k \in \{1,2\}$ 
there are $p_k \in G$ 
and 
$s_k \in dom f^{p_k}_\xi$ such that $x \in V_{s_k}$ and 
$$\text{either }  f^{p_k}_\xi(s_k) = (\varphi_k, \xi) \text{ and } \varphi_k \text{ is the constant function equal to } j_k$$
$$\text{or } f^{p_k}_\xi(s_k) = (\varphi_k, \eta_k) \text{ for some $\eta_k < \xi$ and } x \in A_{\eta_k, i} \text{ for some } i \in \varphi_k^{-1}(j_k).$$

Let $p \in G$ be such that $p\leq p_1,p_2$ and let $t \in 2^{n_p} \setminus \{x_\xi|n_p\}$ be such that $x \in V_t$. Then, $t \supseteq s_k$ since $x \in V_{s_k}$ and hence,
by the definition of extension in $\mathbb{P}$, $f^{p_1}_\xi(s_1) = f^{p}_\xi(t) = f^{p_2}_\xi(s_2)$, so that $\varphi_1 = \varphi_2$. 

Now, if $f^{p}_\xi(t) = (\varphi, \xi)$, this would mean that $\varphi_1$ and $\varphi_2$ are both constant equal to $j_1$ and $j_2$, contradicting the hypothesis
that $j_1 \neq j_2$. Otherwise, if $f^{p}_\xi(t) = (\varphi, \eta)$, for some $\eta < \xi$, 
we would get that $x \in A_{\eta, i_k}$ for some $i_k \in \varphi^{-1}(j_k)$. By the inductive hypothesis we get that
$i_1 = i_2 \in \varphi^{-1}(j_1) \cap \varphi^{-1}(j_2)$, which implies that $j_1 = j_2$, contradicting again the hypothesis that those are 
distinct. 

This concludes the proof that the family satisfies condition (2) of Definition \ref{splittingFamily}.

\noindent \textbf{(3).} Again we prove it by induction on $\xi$. So, let $\xi<\omega_1$, suppose $K = A_{\eta,1} \cup \dots \cup A_{\eta, 2n}$ for any
$\eta<\xi$ and let $x \in K$.

If $x = (x_\xi, i)$ for some $i \in [2n]$, then $x \in A_{\xi, i}$ by definition.

By Lemma \ref{lemaDensidade}, let $p \in G$ be such that
 $x \in V_s$ for some $s \in  2^{n_p} \setminus \{x_\xi|n_p\}$. 

If $f^p_\xi(s)  = (\varphi, \xi)$, by Lemma \ref{PropBas}.(a) we get that
$V_s \subseteq A_{\xi, \varphi(1)}$, which guarantees that $x \in A_{\xi, \varphi(1)}$. 

Otherwise, if $f^p_\xi(s)  = (\varphi, \eta)$, 
for some $\eta < \xi$, by the inductive hypothesis, let $i \in [2n]$ be such that $x \in A_{\eta, i}$. Then, by Lemma \ref{PropBas}.(b),
$V_s \cap A_{\eta, i} \subseteq A_{\xi, \varphi(i)}$, which implies that $x \in A_{\xi, \varphi(i)}$ and
concludes the proof of condition (3) of Definition \ref{splittingFamily}.

\noindent \textbf{(4).} Fix $\eta<\xi<\omega_1$ and $i \in [2n]$. By Lemma \ref{lemaDensidade}, let $p \in G$ be such that
$\xi, \eta \in F_p$ and $x_\eta|n_p \neq x_\xi|n_p$. 

If $f^p_\xi(x_\eta|n_p) = (\varphi, \xi)$, by Lemma \ref{PropBas}.(a) we get that
$V_{x_\eta|n_p} \subseteq A_{\xi, \varphi(1)}$ (and in particular $V_{x_\eta|n_p} \cap A_{\eta, i} \subseteq V_{x_\eta|n_p} \cap A_{\xi, \varphi(1)}$).

If $f^p_\xi(x_\eta|n_p) = (\varphi, \eta)$, for some $\eta < \xi$, then, by Lemma \ref{PropBas}.(b),
$V_{x_\eta|n_p} \cap A_{\eta, i} \subseteq A_{\xi, \varphi(i)}$ (and in particular $V_{x_\eta|n_p} \cap A_{\eta, i} \subseteq V_{x_\eta|n_p} \cap A_{\xi, \varphi(i)}$) 
and we're done with condition (4) of Definition \ref{splittingFamily}.

\noindent \textbf{(5).} Let us prove this by induction on $\xi<\omega_1$. Let $\xi < \omega_1$ and 
$x \in 2^\omega \setminus \{x_\eta: \eta\leq \xi\}$.

If $x = x_\eta$ for some $\eta > \xi$, by Lemma \ref{lemaDensidade} there
is t $p \in G$ be such that $\xi, \eta \in F_p$. Otherwise, 
if $x \in 2^\omega \setminus \{x_\eta: \eta<\omega_1\}$, by Lemma \ref{lemaDensidade} there is $p \in G$ be such that $\xi \in F_p$ 
and $x|n_p \neq x_\xi|n_p$. In both cases, put $s = x|n_p$.

If $f^p_\xi(s) = (\varphi, \xi)$, then, by Lemma \ref{PropBas}.(a), $V_{s} \subseteq A_{\xi, \varphi(1)}$. 

If $f^p_\xi(s) = (\varphi, \eta')$
for some $\eta' \in F_p \cap \xi$, by the inductive hypothesis, there is $k \in \mathbb{N}$ and $i \in [2n]$ such that $V_{x|k} \subseteq A_{\eta',i}$.
By Lemma \ref{lemaDensidade}, let $q \in G$ be such that $q \leq p$ and $n_q \geq k$. Putting $t=x|n_q$, we get that
$V_t \subseteq V_{x|k} \subseteq A_{\eta',i}$ and
$f^q_\xi(t) = f^p_\xi(s) = (\varphi, \eta')$, since $t \supseteq s$. This implies by Lemma \ref{PropBas}.(b) that $V_t= 
V_t \cap A_{\eta', i} \subseteq A_{\xi, \varphi(i)}$, which concludes the proof of condition (5) of Definition \ref{splittingFamily}.  

Hence, the family formed by $A_{\xi, i}$'s is a $2n$-splitting family.

\noindent \textbf{(6).} Let us prove this by induction on $\xi<\omega_1$. 
So, fix $\xi<\omega_1$ and suppose we know that for all $\zeta <\xi$, all $\eta \neq \zeta$ and all $j \in [2n]$,
$$|\{i \in \{1, 3, \dots, 2n-1\}: (x_\eta,i) \in A_{\zeta,j}\}| = |\{i \in \{2, 4, \dots, 2n\}: (x_\eta,i) \in A_{\zeta,j}\}|.$$

Now, fix $\eta \neq \xi$. Let $p \in G$ be such that $\xi, \eta \in F_p$, so that $x_\eta|n_p \in dom f^p_\xi$. 

If $f^p_\xi(x_\eta|n_p) = (\varphi, \xi)$, then, by Lemma \ref{PropBas}.(a), $V_{x_\eta|n_p} \subseteq A_{\xi, \varphi(1)}$, which implies that
$(x_\eta, i) \in A_{\xi, \varphi(1)}$ for all $i \in [2n]$. By the disjointness of the $A_{\xi,i}$'s (3)
and condition (6) 
of Definition \ref{balancedFamily}
holds both for $A_{\xi, \varphi(1)}$ 
(which contains all $(x_\eta,i)$) and for $A_{\xi, j}$, $j \neq \varphi(1)$ (which contain no $(x_\eta,i)$).

If $f^p_\xi(x_\eta|n_p) = (\varphi, \zeta)$ for some $\zeta < \xi$ in $F_p$, then for all $i \in [2n]$, $V_{x_\eta|n_p} \cap A_{\zeta, i} \subseteq A_{\xi, \varphi(i)}$.
This means that each $A_{\xi, j}$ contains exactly those $(x_\eta,k)$ which are in $A_{\zeta,i}$ for some $i \in \varphi^{-1}(j)$. In particular, we have that
$$\{k \in \{1, 3, \dots, 2n-1\}: (x_\eta, k) \in A_{\xi,j}\}$$ 
$$= \{k \in \{1, 3, \dots, 2n-1\}: (x_\eta,k) \in A_{\zeta,i} \text{ for some }i \in \varphi^{-1}(j)\}$$
$$= \bigcup_{i \in \varphi^{-1}(j)}  \{k \in \{1, 3, \dots, 2n-1\}: (x_\eta,k) \in A_{\zeta,i}\}$$
and
$$\{k \in \{2, 4, \dots, 2n\}: (x_\eta,k) \in A_{\xi,j}\}$$
$$ = \{k \in \{2, 4, \dots, 2n\}: (x_\eta,k) \in A_{\zeta,i} \text{ for some }i \in \varphi^{-1}(j)\}$$
$$= \bigcup_{i \in \varphi^{-1}(j)}  \{k \in \{2, 4, \dots, 2n\}: (x_\eta,k) \in A_{\zeta,i}\}.$$

Let us now consider two cases:

If $\eta = \zeta$, since $(x_\eta,k) \in A_{\eta,k}$, we get that 
$$\{k \in \{1, 3, \dots, 2n-1\}: (x_\eta,k) \in A_{\xi,j}\} = \{k \in \{1, 3, \dots, 2n-1\}: k \in \varphi^{-1}(j)\}$$
and
$$\{k \in \{2, 4, \dots, 2n\}: (x_\eta,k) \in A_{\xi,j}\} = \{k \in \{2, 4, \dots, 2n\}: k \in \varphi^{-1}(j)\}.$$
By property 3.b) of the definition of the partial ordering, the sets on the right-hand side of these two equalities have 
same size, which guarantees that
$$|\{k \in \{1, 3, \dots, 2n-1\}: (x_\eta,k) \in A_{\xi,j}\}| = |\{k \in \{2, 4, \dots, 2n\}: (x_\eta,k) \in A_{\xi,j}\}|,$$
concluding the proof in this case.

If $\eta \neq \zeta$, by the inductive hypothesis we know that for all $i \in [2n]$,
$$|\{k \in \{1, 3, \dots, 2n-1\}: (x_\eta,k) \in A_{\zeta,i}\}| = |\{k \in \{2, 4, \dots, 2n\}: (x_\eta,k) \in A_{\zeta,i}\}|.$$
Hence,
$$|\{k \in \{1, 3, \dots, 2n-1\}: (x_\eta, k) \in A_{\xi,j}\}|$$
$$ = |\bigcup_{i \in \varphi^{-1}(j)}  \{k \in \{1, 3, \dots, 2n-1\}: (x_\eta,k) \in A_{\zeta,i}\}|$$
$$ = |\bigcup_{i \in \varphi^{-1}(j)}  \{k \in \{2, 4, \dots, 2n\}: (x_\eta,k) \in A_{\zeta,i}\}|$$
$$= |\{k \in \{2, 4, \dots, 2n\}: (x_\eta,k) \in A_{\xi,j}\}|,$$
which concludes the proof of condition (6) of Definition \ref{balancedFamily}, that is,
the family of $A_{\xi,i}$'s is a balanced $2n$-splitting family.
\end{proof}

\begin{proposition}\label{propAmalgamacao}
Let $p_1 = (F_1, n_1, (f^1_\xi: \xi \in F_1))$ and $p_2= (F_2, n_2, (f^2_\xi: \xi \in F_2))$ be conditions of $\mathbb{P}$ such that:
\begin{itemize}
\item $F_1 \cap F_2 < F_1 \setminus F_2<F_2 \setminus F_1$;
\item $n_1=n_2 = n$;
\item there is an order-preserving bijection $e: F_1 \rightarrow F_2$ such that 
\begin{itemize}
\item for all $\xi \in F_1$, $x_\xi|n = x_{e(\xi)}|n$;
\item for all $\xi \in F_1$ and all $s \in 2^{n_1} \setminus \{x_\xi|n_1\} (= 2^{n_2} \setminus \{x_{e(\xi)}|n_2\})$,
$$f^2_{e(\xi)}(s) = (\varphi, e(\eta)) \quad  \text{where } f^1_\xi(s) = (\varphi, \eta).$$
\end{itemize}
\end{itemize}
Then, given $(\epsilon_\xi: \xi \in F_1 \setminus F_2) \subseteq [2n]^{[2n]}$ such that for all $\xi \in F_1 \setminus F_2$
$$\forall j \in [2n] \quad |\epsilon_\xi^{-1} (j) \cap \{1, 3, 5, \dots, 2n-1\}| = |\epsilon_\xi^{-1}(j) \cap \{2, 4, 6, \dots, 2n\}|$$
and given constant functions $(\delta_\xi: \xi \in F_1 \setminus F_2) \subseteq [2n]^{[2n]}$, there is $q \leq p_1, p_2$, $q \in \mathbb{P}$ such that
\begin{equation}\label{eq1}
\forall \xi \in F_1 \setminus F_2 \qquad f^q_\xi(x_{e(\xi)}|n_q) = (\delta_\xi, \xi)
\quad \text{and} \quad 
f^q_{e(\xi)}(x_{\xi}|n_q) = (\epsilon_\xi, \xi).
\end{equation}
\end{proposition}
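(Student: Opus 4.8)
The plan is to build $q$ by amalgamating $p_1$ and $p_2$ over their common part, increasing $n$ by one level so that we have room to prescribe the new values on $V_{x_{e(\xi)}|n_q}$ and $V_{x_\xi|n_q}$. First I would set $F_q = F_1 \cup F_2$ and $n_q = n+1$. The hypothesis $F_1\cap F_2 < F_1\setminus F_2 < F_2\setminus F_1$ together with the order-preserving bijection $e$ tells us exactly how the points $x_\xi$ for $\xi\in F_q$ are arranged: for $\xi\in F_1$ and $e(\xi)\in F_2$ we have $x_\xi|n = x_{e(\xi)}|n$, so at level $n$ these two points are not yet separated; since $n_1=n_2=n$ is a valid splitting level for both $p_1$ and $p_2$ individually, I must first check that passing to level $n+1$ I can separate each such pair $x_\xi, x_{e(\xi)}$ (they are distinct elements of $2^\omega$, but possibly $x_\xi|(n+1)=x_{e(\xi)}|(n+1)$). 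Here is the one genuine subtlety: I may need to take $n_q$ larger than $n+1$ — namely large enough that $x_\xi|n_q \neq x_{e(\xi)}|n_q$ for every $\xi\in F_1\setminus F_2$ and also $x_\eta|n_q$ are pairwise distinct across all of $F_q$ — and then, after defining $q$ at that level, the values $(\delta_\xi,\xi)$ and $(\epsilon_\xi,\xi)$ get assigned to the nodes $x_{e(\xi)}|n_q$ and $x_\xi|n_q$ respectively. Choosing $n_q$ this way is harmless by density (Lemma \ref{lemaDensidade}(i) applied inside the construction), so I will simply fix $n_q$ minimal with these separation properties.

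Next I would define $f^q_\xi$ for each $\xi\in F_q$. For $\xi\in F_1\cap F_2$ (where necessarily $e(\xi)=\xi$ by order-preservation and the position of $F_1\cap F_2$), $f^q_\xi$ extends $f^1_\xi = f^2_\xi$ in the canonical way: $f^q_\xi(t) = f^{1}_\xi(t|n)$ for $t\in 2^{n_q}\setminus\{x_\xi|n_q\}$ with $t|n\neq x_\xi|n$, and on the finitely many new nodes $t$ with $t|n = x_\xi|n$ but $t\neq x_\xi|n_q$ I assign $(\varphi,\xi)$ with $\varphi$ constant equal to $1$, exactly as in the proof of Lemma \ref{lemaDensidade}(i). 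For $\xi\in F_1\setminus F_2$, I extend $f^1_\xi$ the same canonical way on all new nodes \emph{except} the node $x_{e(\xi)}|n_q$, where instead I set $f^q_\xi(x_{e(\xi)}|n_q) = (\delta_\xi,\xi)$; this is legal since $\delta_\xi$ is a constant function and $\xi\in F_q\cap(\xi+1)$. Symmetrically, for $e(\xi)\in F_2\setminus F_1$, I extend $f^2_{e(\xi)}$ canonically on new nodes except at $x_\xi|n_q$, where I set $f^q_{e(\xi)}(x_\xi|n_q) = (\epsilon_\xi,\xi)$; this needs $\xi < e(\xi)$ (true, since $F_1\setminus F_2 < F_2\setminus F_1$ and $\xi\in F_1\setminus F_2$, $e(\xi)\in F_2\setminus F_1$), so $\xi \in F_q\cap(e(\xi)+1)$, and the balance condition 3.b) on the pair $(\epsilon_\xi, \xi)$ is exactly the hypothesis imposed on $\epsilon_\xi$. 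Note also $x_\xi|n_q$ does lie in $\dom f^q_{e(\xi)} = 2^{n_q}\setminus\{x_{e(\xi)}|n_q\}$ because $x_\xi|n_q\neq x_{e(\xi)}|n_q$ by our choice of $n_q$.

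Then I would verify $q\in\mathbb{P}$: condition 1 is clear; condition 2 holds by the choice of $n_q$; condition 3 must be checked node by node — at nodes inherited from $p_1$ or $p_2$ it is inherited, at canonical-extension nodes it holds because we used a constant function with target $\xi$, and at the two families of prescribed nodes it holds by the assumptions on $\delta_\xi$ (constant) and on $\epsilon_\xi$ (the balance identity, which is precisely 3.b) for the reference ordinal $\xi<e(\xi)$). Finally I would check $q\leq p_1$ and $q\leq p_2$: for $\xi\in F_1$ and $t\in 2^{n_q}\setminus\{x_\xi|n_q\}$, $s\in 2^{n}\setminus\{x_\xi|n\}$ with $s\subseteq t$, we need $f^q_\xi(t)=f^1_\xi(s)$; this is immediate from the canonical extension, and the prescribed node $x_{e(\xi)}|n_q$ restricts to $x_{e(\xi)}|n = x_\xi|n$, which equals $x_\xi|n$ — but that node is \emph{excluded} from $\dom f^1_\xi$, so there is no constraint there and no conflict. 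The analogous check for $q\leq p_2$ is symmetric, using $x_\xi|n = x_{e(\xi)}|n$ so that the node $x_\xi|n_q$ sits above the excluded node $x_{e(\xi)}|n$ of $p_2$. This yields \eqref{eq1} by construction. The main obstacle is the bookkeeping in the previous paragraph — ensuring the two ``prescribed'' nodes genuinely lie in the relevant domains and do not collide with the domains of $p_1,p_2$ under restriction — which is why the hypothesis $x_\xi|n = x_{e(\xi)}|n$ (so the prescribed node of $f^q_\xi$ restricts to the excluded node $x_\xi|n$ of $f^1_\xi$, and vice versa) is doing the essential work.
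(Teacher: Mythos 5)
Your construction coincides with the paper's: take $F_q=F_1\cup F_2$, take $n_q$ large enough to separate all the relevant points of $2^\omega$, copy $f^1_\xi(t|n)$ resp.\ $f^2_\xi(t|n)$ on old nodes, and plant $(\delta_\xi,\xi)$ at $x_{e(\xi)}|n_q$ and $(\epsilon_\xi,\xi)$ at $x_\xi|n_q$, which is legal precisely because $x_{e(\xi)}|n=x_\xi|n$ is the excluded node of both $f^1_\xi$ and $f^2_{e(\xi)}$ and because $\xi<e(\xi)$ makes the balance hypothesis on $\epsilon_\xi$ exactly condition 3.b). The only (harmless) cosmetic difference is that on the remaining new nodes above $x_\xi|n$ you use the constant function $1$ while the paper reuses $\delta_\xi$; all the verifications you list are the ones the paper carries out.
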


\begin{proof}
Let $q = (F_q, n_q, (f^q_\xi: \xi \in F_q))$ be defined
as follows: let $F_q = F_1 \cup F_2$; let $n_q \in \mathbb{N}$ be such $n_p \leq n_q$ and for all $\xi <\eta \in F_q$, $x_\xi|n_q \neq x_\eta|n_q$; 
for each $\xi \in F_q$ and $t \in 2^{n_q} \setminus \{x_\xi|n_q\}$, let 
$$f^q_\xi(t) =  \left\{
              \begin{array}{llr}
					f^1_\xi(t|n) & \text{if } \xi \in F_1 \text{ and } t|n \neq x_\xi|n & (\text{Case 1.})\\
					f^2_\xi(t|n) & \text{if } \xi \in F_2 \text{ and } t|n \neq x_\xi|n & (\text{Case 2.})\\
					(\delta_\xi, \xi) & \text{if } \xi \in F_1 \text{ and } t|n = x_\xi|n & (\text{Case 3.})\\
					(\epsilon_{e^{-1}(\xi)}, e^{-1}(\xi)) & \text{if } \xi \in F_2 \setminus F_1 \text{ and } t|n = x_\xi|n & (\text{Case 4.})
              \end{array} \right.$$

$f^q_\xi$ is well-defined since $e(\xi)=\xi$ whenever $\xi \in F_1 \cap F_2$, so that $f^1_\xi(s) = f^2_{e(\xi)}(s) =f^2_\xi(s)$ for $s \in 2^{n} \setminus \{x_\xi|n\}$.

Let us now prove that $q \in \mathbb{P}$. Conditions 1 and 2 follow directly from the definition of $F_q$ and $n_q$. 

To prove that $q$ satisfies condition 3, fix $\xi \in F_q$
and $t \in 2^{n_q} \setminus \{x_\xi|n_q\}$. In Case 1 (resp. Case 2), both conditions 3.a) and 3.b) follow from the fact that $p_1$ (resp. $p_2$) is in $\mathbb{P}$. 

In Case 3, we only have to check condition 3.a), which is guaranteed by the fact that $(\delta_\xi: \xi \in F_1\setminus F_2) \subseteq [2n]^{[2n]}$ are assumed to be constant. 

Similarly, in Case 4, we only have to check condition 3.b), which is guaranteed by the fact that $(\epsilon_\xi: \xi \in F_1 \setminus F_2) \subseteq [2n]^{[2n]}$ 
are assumed to be as needed. 

Let us now prove that $q \leq p_1 , p_2$. Trivially, $F_1, F_2 \subseteq F_q$ and $n_1, n_2 \leq n_q$. 

Given $\xi \in F_q$, $s \in 2^n \setminus \{x_\xi|n\}$ and $t \in 2^{n_q} \setminus \{x_\xi|n_q\}$ such that $s \subseteq t$, let $k \in \{1,2\}$
be such that $\xi \in F_k$ and notice that we are in cases 1 or 2, since $t|n=s$. Therefore, $f^q_\xi(t) = f^k_\xi(t|n) = f^k_\xi(s)$, which
concludes that $q \leq p_1, p_2$.

Finally, notice that the definition of $f^q_\xi(t)$ in cases 1 or 2 imply (\ref{eq1}).
\end{proof}

\begin{theorem}
$\mathbb{P}$ is c.c.c.
\end{theorem}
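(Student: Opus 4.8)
The plan is to use a $\Delta$-system argument to reduce an arbitrary uncountable subset of $\mathbb{P}$ to a configuration in which Proposition~\ref{propAmalgamacao} directly applies. First I would take an arbitrary family $\{p_\alpha : \alpha < \omega_1\} \subseteq \mathbb{P}$, where $p_\alpha = (F_\alpha, n_\alpha, (f^\alpha_\xi : \xi \in F_\alpha))$, and shrink it to an uncountable subfamily with uniform combinatorial structure. Since each $F_\alpha$ is finite, by the $\Delta$-system lemma we may assume $\{F_\alpha : \alpha < \omega_1\}$ forms a $\Delta$-system with root $R$; by further thinning we may assume all $F_\alpha$ have the same size $m$, all $n_\alpha$ equal some fixed $n$ (there are only countably many possible values, and by density Lemma~\ref{lemaDensidade}(i) this is not restrictive), and, enumerating $F_\alpha = \{\xi^\alpha_1 < \dots < \xi^\alpha_m\}$ in increasing order, that the "type" of the tuple is constant: the positions of the root elements are the same, the values $x_{\xi^\alpha_i}|n \in 2^n$ are the same for all $\alpha$ (only finitely many possibilities), and the functions $f^\alpha_{\xi^\alpha_i}$, viewed through the canonical identification of $2^n \setminus \{x_{\xi^\alpha_i}|n\}$ across different $\alpha$'s and with the pointer coordinate recorded as the \emph{position} $e$-image rather than the actual ordinal, are literally the same for all $\alpha$. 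This last normalization is exactly what guarantees that the order-preserving bijection between $F_\alpha$ and $F_\beta$ satisfies the hypotheses of Proposition~\ref{propAmalgamacao}.

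Next I would observe that, having done this thinning, for any two indices $\alpha < \beta$ with $F_\alpha \setminus R < F_\beta \setminus R$ (which holds for a cofinal set of pairs since the $\Delta$-system root is fixed and the "free parts" can be assumed to be increasingly placed — shrink once more so that $\sup F_\alpha < \min(F_\beta \setminus R)$ whenever $\alpha < \beta$), the pair $(p_\alpha, p_\beta)$ meets all the bulleted hypotheses of Proposition~\ref{propAmalgamacao}: the root condition $F_\alpha \cap F_\beta < F_\alpha \setminus F_\beta < F_\beta \setminus F_\alpha$ holds, the stem lengths agree, and the order-preserving bijection $e : F_\alpha \to F_\beta$ preserves the $x_\xi|n$ values and intertwines the $f$-functions (the pointer ordinals get mapped correctly precisely because we normalized by position). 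Then Proposition~\ref{propAmalgamacao} — applied, say, with all $\epsilon_\xi$ and $\delta_\xi$ taken to be the constant function equal to $1$, which is a legal choice since the constant function trivially satisfies the balance condition in~\eqref{eq1}'s hypothesis — produces a common extension $q \leq p_\alpha, p_\beta$ in $\mathbb{P}$. Hence $p_\alpha$ and $p_\beta$ are compatible, so the original family was not an antichain.

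Therefore $\mathbb{P}$ has no uncountable antichain, i.e.\ $\mathbb{P}$ is c.c.c.

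I expect the main obstacle to be the bookkeeping in the thinning step: one must be careful that the identification of domains $2^n \setminus \{x_{\xi^\alpha_i}|n\}$ across $\alpha$'s is canonical (it is, once $x_{\xi^\alpha_i}|n$ is fixed independently of $\alpha$), and that the pointer coordinate $\eta$ appearing in values $f^\alpha_{\xi^\alpha_i}(s) = (\varphi, \eta)$ always points to some $\xi^\alpha_j$ with $j \le i$ (this is condition~3 in the definition of $\mathbb{P}$), so that recording $\eta$ by its position $j$ and re-thinning to make this data constant is legitimate and makes $e$ automatically satisfy $f^2_{e(\xi)}(s) = (\varphi, e(\eta))$. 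Everything else is a routine application of the $\Delta$-system lemma together with the pigeonhole principle on finitely many combinatorial types, followed by a direct invocation of Proposition~\ref{propAmalgamacao}.
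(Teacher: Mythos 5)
Your proposal is correct and follows essentially the same route as the paper: a $\Delta$-system refinement plus pigeonhole on the finitely many ``types'' (common $n$, common traces $x_\xi|n$, and $f$-functions identified via the order-preserving bijection with pointers recorded by position), followed by a direct application of Proposition~\ref{propAmalgamacao} with constant $\delta_\xi$'s and with $\epsilon_\xi$'s chosen constant (which indeed satisfy the balance condition). The paper's proof is exactly this argument, so no further comment is needed.
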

\begin{proof}
For each $\alpha<\omega_1$, let $p_\alpha=(F_\alpha, n_\alpha, (f^\alpha_\eta)_{\eta\in F_\alpha}) \in
\mathbb{P}$. 

By the $\Delta$-system Lemma, we can assume that
$(F_\alpha)_{\alpha<\omega_1}$ forms a $\Delta$-system with root $\Delta$ such that for every $\alpha < \beta < \omega_1$,
\begin{itemize}
\item $\Delta < F_\alpha \setminus \Delta < F_\beta \setminus \Delta$ and $|F_\alpha|=|F_\beta|$.
\end{itemize}
Since each $n_\alpha \in \mathbb{N}$, we can suppose that for every $\alpha < \beta < \omega_1$, 
\begin{itemize}
\item $n_\alpha = n_\beta = n$.
\end{itemize}
Also, we may assume that if $e_{\alpha\beta}:F_\alpha \rightarrow F_\beta$ is the order-preserving bijective function, then 
\begin{itemize}
\item for all $\xi \in F_\alpha$, $x_\xi|n = x_{e_{\alpha \beta}(\xi)}|n$ (since both belong to $2^n$);
\item for all $\xi \in F_\alpha$ and all $s \in 2^{n} \setminus \{x_\xi|n\}$, 
$$f^\beta_{e_{\alpha \beta}(\xi)}(s)= (\varphi, e_{\alpha \beta}(\eta)),  \text{ where } f^\alpha_\xi(s) = (\varphi, \eta).$$
\end{itemize}

Now, fix $\alpha<\beta<\omega_1$. Note that $p_\alpha$ and $p_\beta$ satisfy the hypothesis of Proposition
\ref{propAmalgamacao}. Let, for $\xi \in F_\beta \setminus \Delta$, $\epsilon_\xi$ be any function satisfying the 
condition 3 of the definition of the forcing (for example, $\epsilon_\xi$ constant equal to $1$); and for $\xi \in F_\alpha \setminus \Delta$, $\delta_\xi \in
[2n]^{[2n]}$ be any constant function. Then, by Proposition \ref{propAmalgamacao}, there is $q\leq p_\alpha, p_\beta$ in
$\mathbb{P}$, which concludes the proof.
\end{proof}

\begin{theorem}\label{generictheorem}
Let $n\geq 1$ be a natural number.
It is consistent that there is a compact Hausdorff totally disconnected space $K$ which is an unordered $2n$-split Cantor set
corresponding to a balanced $2n$-splitting family $(A_{\xi,i}: \xi<\omega_1, i \in [2n])$ 
such that given any collection of pairwise disjoint sets $E_\alpha=\{\xi_\alpha^1,..., \xi_\alpha^k\}\subseteq\omega_1$ 
for $\alpha<\omega_1$,  given $\epsilon:[k]\times [2n] \rightarrow[2n]$ such that 
$$|\{l\in\{1,3,5,...,2n-1\}: \epsilon(i,l)=j\}|= |\{l\in\{2,4,6,...,2n\}: \epsilon(i,l)=j\}|$$
and given $\delta:[k]\rightarrow [n]$, there are $\alpha<\beta$ such that for all $1\leq i\leq k$, 
$$R_{\xi_\beta^i}\subseteq A_{\xi_\alpha^i,\delta(i)}$$
and
$$(x_{\xi^i_\alpha}, l)\in  A_{\xi_\beta^i, \epsilon(i,l)}.$$
\end{theorem}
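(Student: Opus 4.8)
The plan is to force with $\mathbb{P}$ over a model of CH (so that $\mathbb{P}$ has size $\aleph_1$ and nice names can be handled), let $G$ be generic, and take $K$ to be the unordered $2n$-split Cantor set determined by the balanced $2n$-splitting family $(A_{\xi,i}:\xi<\omega_1,i\in[2n])$ produced by Definition \ref{axis}. That this family is indeed a balanced $2n$-splitting family, and hence that $K$ is compact Hausdorff totally disconnected, is exactly Theorem \ref{listaPropriedades} together with Proposition \ref{splitCantor}. So the only thing left to verify is the Ramsey-type amalgamation statement, and this should be forced by a density argument.

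First I would work in $V$ (before forcing): suppose $r\in\mathbb{P}$ forces that $\dot E_\alpha=\{\dot\xi_\alpha^1,\dots,\dot\xi_\alpha^k\}$ for $\alpha<\omega_1$ are pairwise disjoint $k$-element subsets of $\omega_1$, and that $\epsilon,\delta$ are as in the statement (these are ground-model objects since they are finite). For each $\alpha<\omega_1$ pick a condition $p_\alpha\le r$ deciding the values $\xi_\alpha^1,\dots,\xi_\alpha^k$, say $p_\alpha\Vdash\dot\xi_\alpha^i=\xi_\alpha^i$, and by strengthening (Lemma \ref{lemaDensidade}) arrange $\{\xi_\alpha^1,\dots,\xi_\alpha^k\}\subseteq F_{p_\alpha}$. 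Now apply exactly the normalization used in the proof that $\mathbb{P}$ is c.c.c.: by the $\Delta$-system lemma and pigeonhole, pass to an uncountable $A\subseteq\omega_1$ such that the $F_{p_\alpha}$ form a $\Delta$-system with root $\Delta$, with $\Delta<F_{p_\alpha}\setminus\Delta<F_{p_\beta}\setminus\Delta$ for $\alpha<\beta$ in $A$, all $n_{p_\alpha}=n$, all $|F_{p_\alpha}|$ equal, and the order-preserving bijections $e_{\alpha\beta}:F_{p_\alpha}\to F_{p_\beta}$ respect the restrictions $x_\xi|n$ and carry each $f^{p_\alpha}_\xi$ to $f^{p_\beta}_{e_{\alpha\beta}(\xi)}$ in the sense of Proposition \ref{propAmalgamacao}; moreover, since the $\dot E_\alpha$ are forced pairwise disjoint, we may also assume (shrinking $A$ further if needed, using that $E_\alpha$ is an increasing enumeration and the $\xi_\alpha^i$ must eventually lie in $F_{p_\alpha}\setminus\Delta$ and be separated) that for $\alpha<\beta$ in $A$ the tuple $(\xi_\alpha^1,\dots,\xi_\alpha^k)$ lies entirely in $F_{p_\alpha}\setminus\Delta$, the tuple $(\xi_\beta^1,\dots,\xi_\beta^k)$ entirely in $F_{p_\beta}\setminus\Delta$, and $e_{\alpha\beta}(\xi_\alpha^i)=\xi_\beta^i$ for each $i\in[k]$.

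Next, fix any $\alpha<\beta$ in $A$. Then $p_{p_\alpha}$ and $p_{p_\beta}$ satisfy the hypotheses of Proposition \ref{propAmalgamacao} with $F_1=F_{p_\alpha}$, $F_2=F_{p_\beta}$, $e=e_{\alpha\beta}$. For $\xi\in F_1\setminus F_2$ of the form $\xi=\xi_\alpha^i$, set $\epsilon_\xi:=\epsilon(i,\cdot)\in[2n]^{[2n]}$ — by the balance hypothesis on $\epsilon$ this function satisfies the parity requirement of the proposition — and set $\delta_\xi$ to be the constant function with value $\delta(i)$; for the remaining $\xi\in F_1\setminus F_2$ (those not among the $\xi_\alpha^i$) choose $\epsilon_\xi,\delta_\xi$ arbitrarily subject to the conditions 3.a), 3.b). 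Proposition \ref{propAmalgamacao} yields $q\le p_{p_\alpha},p_{p_\beta}$ with $f^q_{\xi_\alpha^i}(x_{\xi_\beta^i}|n_q)=(\delta_{\xi_\alpha^i},\xi_\alpha^i)$ and $f^q_{\xi_\beta^i}(x_{\xi_\alpha^i}|n_q)=(\epsilon_{\xi_\alpha^i},\xi_\alpha^i)$ for each $i$ (here using $x_{\xi_\alpha^i}|n=x_{e_{\alpha\beta}(\xi_\alpha^i)}|n=x_{\xi_\beta^i}|n$). Now unwind Definition \ref{axis} and Lemma \ref{PropBas}: from $f^q_{\xi_\alpha^i}(x_{\xi_\beta^i}|n_q)=(\delta_{\xi_\alpha^i},\xi_\alpha^i)$ with $\delta_{\xi_\alpha^i}$ constant equal to $\delta(i)$, Lemma \ref{PropBas}.(a) gives $V_{x_{\xi_\beta^i}|n_q}\subseteq A_{\xi_\alpha^i,\delta(i)}$ in the extension by any generic containing $q$, whence $R_{\xi_\beta^i}\subseteq A_{\xi_\alpha^i,\delta(i)}$ (since $R_{\xi_\beta^i}=\bigcap_m V_{x_{\xi_\beta^i}|m}\subseteq V_{x_{\xi_\beta^i}|n_q}$); and from $f^q_{\xi_\beta^i}(x_{\xi_\alpha^i}|n_q)=(\epsilon_{\xi_\alpha^i},\xi_\alpha^i)$ — noting $\xi_\alpha^i<\xi_\beta^i$, so this is the "$\eta<\xi$" clause — Lemma \ref{PropBas}.(b) applied with $\eta=\xi_\alpha^i$ and using $(x_{\xi_\alpha^i},l)\in A_{\xi_\alpha^i,l}$ gives $(x_{\xi_\alpha^i},l)\in V_{x_{\xi_\alpha^i}|n_q}\cap A_{\xi_\alpha^i,l}\subseteq A_{\xi_\beta^i,\epsilon_{\xi_\alpha^i}(l)}=A_{\xi_\beta^i,\epsilon(i,l)}$. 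Thus $q$ forces the desired conclusion for this pair $\alpha<\beta$.

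Finally I would package this as a genuine forcing argument: the above shows that for the fixed $r,\epsilon,\delta$ the set of conditions forcing "there exist $\alpha<\beta$ with $R_{\dot\xi_\beta^i}\subseteq A_{\dot\xi_\alpha^i,\delta(i)}$ and $(x_{\dot\xi_\alpha^i},l)\in A_{\dot\xi_\beta^i,\epsilon(i,l)}$ for all $i,l$" is dense below $r$; since $r$ was arbitrary, in $V[G]$ every system $(E_\alpha)_{\alpha<\omega_1}$ (which, by c.c.c., is captured by some such $r$ together with names) has the required pair. The main obstacle is bookkeeping in the normalization step: one must simultaneously run the $\Delta$-system/isomorphism reduction and ensure that the (generically chosen, hence not ground-model) tuples $\xi_\alpha^i$ end up in the non-root part $F_{p_\alpha}\setminus\Delta$, in the correct order, and matched by $e_{\alpha\beta}$ — this is where the hypothesis that the $E_\alpha$ are pairwise disjoint is essential, since it forces the $\xi_\alpha^i$ to "move off to the right" as $\alpha$ grows, so that on an uncountable subset they are separated from the root and from each other exactly as a $\Delta$-system demands. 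Once that is arranged, the amalgamation is a direct invocation of Proposition \ref{propAmalgamacao} and the conclusion is immediate from Lemma \ref{PropBas}.
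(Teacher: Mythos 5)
Your proposal is correct and follows essentially the same route as the paper: reduce to Theorem \ref{listaPropriedades} and Proposition \ref{splitCantor} for the first part, then decide the names $\dot\xi^i_\alpha$, run the $\Delta$-system/isomorphism normalization from the c.c.c.\ proof (using pairwise disjointness of the $E_\alpha$ to push the $\xi^i_\alpha$ off the root and match them under $e_{\alpha\beta}$), amalgamate via Proposition \ref{propAmalgamacao} with $\epsilon_\xi=\epsilon(i,\cdot)$ and $\delta_\xi$ constant equal to $\delta(i)$, and read off the conclusion from Definition \ref{axis} and Lemma \ref{PropBas}. The only differences are cosmetic (the superfluous CH assumption on the ground model and a slight reindexing of the $\epsilon_\xi$'s).
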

\begin{proof}
By Theorem \ref{listaPropriedades}, $\mathbb{P}$ forces that $(A_{\xi,i}: \xi<\omega_1, i \in [2n])$ as in
Definition \ref{axis} is a balanced $2n$-splitting family. By Proposition \ref{splitCantor}, we get that the corresponding
unordered $2n$-split Cantor set is a compact, Hausdorff, totally disconnected space. Let us now prove the remaining
desired property. 

In $V$, suppose $(\dot{E}_{\alpha})_{\alpha<\omega_1}$ and $(\dot{\xi}^i_\alpha)_{\alpha<\omega_1, 1 \leq i \leq k}$ are sequences of names
such that $\mathbb{P}$ forces that $\dot{E}_\alpha = \{\dot{\xi}^1_\alpha < \dots < \dot{\xi}^k_\alpha\}$ and $(\dot{E}_\alpha)_{\alpha<\omega_1}$ is pairwise disjoint.

For each $\alpha<\omega_1$, let $p_\alpha=(F_\alpha, n_\alpha, (f^\alpha_\eta)_{\eta\in F_\alpha}) \in
\mathbb{P}$, $\xi^1_\alpha, \dots, \xi^k_\alpha \in \omega_1$ and $E_\alpha, \dots,
E_\alpha \subseteq \omega_1$ be finite such that
       $$p_\alpha \Vdash \forall 1\leq i \leq k \quad \dot{\xi}^i_\alpha =
       \check{\xi}^i_\alpha    \text{ and } \dot{E}_\alpha = \check{E}_\alpha.$$

By Lemma \ref{lemaDensidade}, we can assume without loss of generality that
for all $\alpha<\omega_1$, $E^\alpha_i \subseteq F_\alpha$.

By the $\Delta$-system Lemma, we can assume as well that
$(F_\alpha)_{\alpha<\omega_1}$ forms a $\Delta$-system with root $\Delta$ such that for every $\alpha < \beta < \omega_1$,
\begin{itemize}
\item $\Delta < F_\alpha \setminus \Delta < F_\beta \setminus \Delta$ and $|F_\alpha|=|F_\beta|$.
\end{itemize}
Since each $n_\alpha \in \mathbb{N}$, we can suppose that for every $\alpha < \beta < \omega_1$, 
\begin{itemize}
\item $n_\alpha = n_\beta = n$.
\end{itemize}
Also, we may assume that if $e_{\alpha\beta}:F_\alpha \rightarrow F_\beta$ is the order-preserving bijective function, then 
\begin{itemize}
\item for all $\xi \in F_\alpha$, $x_\xi|n = x_{e_{\alpha \beta}(\xi)}|n$ (since both belong to $2^n$);
\item for all $\xi \in F_\alpha$ and all $s \in 2^{n} \setminus \{x_\xi|n\}$, 
$$f^\beta_{e_{\alpha \beta}(\xi)}(s)= (\varphi, e_{\alpha \beta}(\eta)),  \text{ where } f^\alpha_\xi(s) = (\varphi, \eta).$$
\item for all $1 \leq i \leq k$, $e_{\alpha \beta}(\xi^i_\alpha) = \xi^i_\beta$.
\end{itemize}

Finally, we may assume that for all $1 \leq i \leq k$ we have: either
$\xi^i_\alpha = \xi^i_\beta$ for all $\alpha<\beta<\omega_1$; or $\xi^i_\alpha
\notin \Delta$ for all $\alpha<\omega_1$ and actually the second case holds by 
the assumption that $E_\alpha$'s are pairwise disjoint.

Now, fix $\alpha<\beta<\omega_1$. Note that $p_\alpha$ and $p_\beta$ satisfy the hypothesis of Proposition
\ref{propAmalgamacao}. Taking $\epsilon_{\xi^i_\beta} = \epsilon(i, \cdot)$ and $\delta_{\xi^i_\alpha} = \delta(i)$
(and for $\xi \in F_\beta \setminus (\Delta \cup E_\beta)$, any function $\epsilon_\xi$ satisfying the 
condition 3 of the definition of the forcing; and for $\xi \in F_\alpha \setminus (\Delta \cup E_\alpha)$, any constant function $\delta_\xi \in
[2n]^{[2n]}$), by the Proposition \ref{propAmalgamacao}, there is $q\leq p_\alpha, p_\beta$ in
$\mathbb{P}$ such that 
$$\forall \xi \in F_\alpha \setminus \Delta \qquad f^q_\xi(x_{e_{\alpha\beta}(\xi)}|n_q) = (\delta_\xi, \xi) 
\quad \text{and} \quad
f^q_{e_{\alpha\beta}(\xi)}(x_{\xi}|n_q) = (\epsilon_{e_{\alpha\beta}(\xi)}, \xi).$$
In particular, for all $1 \leq i \leq k$, 
$$f^q_{\xi^i_\alpha}(x_{\xi^i_\beta}|n_q) = (\delta(i), \xi^i_\alpha) \quad \text{and}  \quad
f^q_{\xi^i_\beta}(x_{\xi^i_\alpha}|n_q) = (\epsilon(i, \cdot), \xi^i_\alpha).$$

By the definition of $A_{\xi, j}$, we get that for all $1 \leq i \leq k$,
$$R_{\xi^i_\beta} \subseteq A_{\xi^i_\alpha, \delta(i)} \quad \text{and} \quad (x_{\xi^i_\alpha}, l) \in A_{\xi^i_\beta, \epsilon(i, l)},$$
which concludes the proof.
\end{proof}

The fact that $2n$ is even is exploited in the above proof.
It turns out that there cannot be an analogue 
of an unordered $N$-split Cantor set for $N=3$ which
behaves as in Theorem \ref{generictheorem}, as we have the following:

\begin{lemma}\label{lemmathree} 
Let $N\geq 3$ be a natural number.
 Suppose that $K$ is
an unordered $N$-split Cantor set  
corresponding to an $N$-splitting family $(A_{\xi,i}: \xi<\omega_1, i \in [N])$  such that 
given any sequence of distinct ordinals $(\xi_\alpha:\alpha<\omega_1)$
and  $j\in  [N]$,
there are $\alpha<\beta$ such that 
$$R_{\xi_\beta}\subseteq A_{\xi_\alpha,j}.$$
Suppose that $(f_\alpha,\mu_\alpha)_{\alpha<\omega_1}$
is a biorthogonal system such that $f_\alpha=\chi_{A_\alpha}$  for some clopen subset $A_\alpha\subseteq K$ and
$\mu_\alpha=r_\alpha\delta_{(x_{\eta_\alpha},1)}+ s_\alpha\delta_{(x_{\eta_\alpha},2)} +t_\alpha\delta_{(x_{\eta_\alpha},3)}$
for all $\alpha<\omega_1$, for
some reals $r_\alpha, s_\alpha, t_\alpha$  and some sequence
$(\eta_\alpha:\alpha<\omega_1)$. Then there is an uncountable nice biorthogonal system in $C(K)$.
\end{lemma}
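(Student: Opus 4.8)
The plan is to exploit the genericity hypothesis on $K$ to locate, within the given biorthogonal system $(f_\alpha,\mu_\alpha)_{\alpha<\omega_1}$, an uncountable subset whose behaviour is sufficiently homogeneous that the $X$-parts, together with \emph{two-point} differences $\delta_x-\delta_y$ of split points over $A_\alpha$, become biorthogonal. First I would pass to an uncountable subset on which everything is stabilised: by the pigeonhole principle and a $\Delta$-system argument applied to the clopen sets $A_\alpha$ (each of which, by Lemma~\ref{lemmaformoffunction}, is determined by finitely many $V_s$'s and finitely many $A_{\xi,i}$'s), I may assume that the $\eta_\alpha$'s are distinct, that the coefficients $(r_\alpha,s_\alpha,t_\alpha)$ lie in a fixed small neighbourhood of a single triple $(r,s,t)$, that $\mu_\alpha(f_\alpha)=1$ and $\mu_\alpha(f_\beta)$ is controlled, and that each $A_\alpha$ has a uniform ``finite description'' relative to an initial segment of the split points $R_{\eta_\gamma}$. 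In particular I want the descriptions to agree off the ``new'' coordinates so that whether $(x_{\eta_\alpha},l)\in A_\beta$ depends only on $\epsilon$-type data of the generic family.

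The key step is to choose $j\in[N]$ (here $N\geq 3$, and I would use two fixed indices, say $1$ and $2$, among the split points) and apply the genericity hypothesis to the sequence $(\eta_\alpha)_{\alpha<\omega_1}$ with $j$ chosen so that $R_{\eta_\beta}\subseteq A_{\eta_\alpha,j}$ for suitable $\alpha<\beta$; this pins down membership of the triple $R_{\eta_\beta}$ inside a single piece $A_{\eta_\alpha,j}$, and after refining I can read off the values $\chi_{A_\alpha}$ takes on $(x_{\eta_\beta},1),(x_{\eta_\beta},2),(x_{\eta_\beta},3)$ from the description of $A_\alpha$. The arithmetic of the measure $\mu_\alpha=r_\alpha\delta_{(x_{\eta_\alpha},1)}+s_\alpha\delta_{(x_{\eta_\alpha},2)}+t_\alpha\delta_{(x_{\eta_\alpha},3)}$ applied to $f_\alpha$ forces $r_\alpha+s_\alpha+t_\alpha$ times a $\{0,1\}$-pattern to equal $1$; combined with the homogeneity this restricts the pattern of which of $(x_{\eta_\alpha},i)$ lie in $A_\alpha$. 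The point is that with three split points and a biorthogonality constraint, for most $\alpha$ the set $A_\alpha$ must \emph{separate} two of the three points $(x_{\eta_\alpha},1),(x_{\eta_\alpha},2),(x_{\eta_\alpha},3)$ — say it contains one and omits another — and then $\chi_{A_\alpha}$ together with $\delta_{(x_{\eta_\alpha},i)}-\delta_{(x_{\eta_\alpha},i')}$, for that separating pair $(i,i')$, is a candidate nice functional. I would then re-run the genericity (or a second $\Delta$-system refinement) to ensure the cross terms vanish: using the hypothesis with an appropriate $\epsilon$ that maps the two relevant indices to the same block for the ``$\beta$ over $\alpha$'' direction and a $\delta$ sending $R_{\eta_\beta}$ into a block on which $\chi_{A_\alpha}$ is constant, so that $(\delta_{(x_{\eta_\beta},i)}-\delta_{(x_{\eta_\beta},i')})(\chi_{A_\alpha})=0$ and $(\delta_{(x_{\eta_\alpha},i)}-\delta_{(x_{\eta_\alpha},i')})(\chi_{A_\beta})=0$ for $\alpha<\beta$ in the surviving uncountable set.

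The main obstacle is the bookkeeping that guarantees, simultaneously, biorthogonality in both directions after passing to the subsequence: the genericity theorem only produces \emph{one} pair $\alpha<\beta$ per sequence, so I must apply it to an $\omega_1$-indexed family of finite configurations and then extract an uncountable \emph{homogeneous} subset (a free set / $\Delta$-system of configurations) on which the conclusion holds \emph{uniformly} for all pairs at once — this is where the fact that the system is biorthogonal (not merely that the $A_\alpha$ are distinct) is essential, because it is what guarantees the separating pair exists for each $\alpha$ rather than, say, all three points lying inside $A_\alpha$ or all three outside, which would make $\mu_\alpha(f_\alpha)=0$ or force a degenerate coefficient relation. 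I expect the write-up to spend most of its length on the case analysis: which of the $2^3$ membership patterns of $(x_{\eta_\alpha},i)\in A_\alpha$ are compatible with $\mu_\alpha(f_\alpha)=1$, eliminating the incompatible ones, grouping the compatible ones by the induced separating pair, and finally thinning to an uncountable set with a single pattern and single pair, on which the nice system is then immediate.
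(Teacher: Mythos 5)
Your outline stalls exactly at the point you yourself flag as ``the main obstacle'': making the cross terms $(\delta_{(x_{\eta_\alpha},i)}-\delta_{(x_{\eta_\alpha},i')})(\chi_{A_\beta})$ vanish for \emph{all} pairs $\alpha\neq\beta$ in an uncountable set. The hypothesis of the lemma produces only \emph{one} pair $\alpha<\beta$ with $R_{\xi_\beta}\subseteq A_{\xi_\alpha,j}$ per uncountable sequence, and there is no Ramsey-type extraction of an uncountable set homogeneous for all pairs (recall that $\omega_1\to(\omega_1)^2_2$ fails); so ``re-running the genericity'' or a ``second $\Delta$-system refinement'' does not close the argument. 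The missing idea is arithmetic, not combinatorial.

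After disposing of the degenerate cases where some coefficient vanishes (which yield nice systems directly), the paper uses the genericity hypothesis exactly once, and only negatively: if $|r_\alpha+s_\alpha+t_\alpha|>\varepsilon$ on an uncountable set, then after stabilizing an index $j$ and a basic neighbourhood $V_s\cap A_{\eta_\alpha,j}\subseteq A_\alpha$ witnessing $\mu_\alpha(\chi_{A_\alpha})\neq 0$, the hypothesis gives $\alpha<\beta$ with $R_{\eta_\beta}\subseteq V_s\cap A_{\eta_\alpha,j}\subseteq A_\alpha$, whence $0=\mu_\beta(\chi_{A_\alpha})=r_\beta+s_\beta+t_\beta$, a contradiction. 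So one may assume $r_\alpha+s_\alpha+t_\alpha=0$ with all three coefficients nonzero, and then no proper nonempty subsum can vanish. Consequently the \emph{given} biorthogonality $\mu_\alpha(\chi_{A_\beta})=0$ already forces, for every $\beta\neq\alpha$, that $A_\beta$ contains either all three points of $R_{\eta_\alpha}$ or none of them. This all-or-nothing behaviour makes every cross term of the new system vanish automatically for every pair, with no further uniformization; one then only needs a pigeonhole choice of an uncountable set on which $A_\alpha$ separates the same pair of its own three points (some pair must be separated since $\mu_\alpha(\chi_{A_\alpha})=1$). Without the chain ``coefficients sum to zero, hence no zero subsum, hence all-or-nothing membership,'' your plan cannot be completed as described.
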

\begin{proof}

If there is a biorthogonal system of the form
$(\chi_{A_\alpha},r_\alpha\delta_{y_\alpha})$ for $\alpha<\omega_1$
and $y_\alpha\in K$,  we have that
 $r_\alpha=1$ for all $\alpha<\omega_1$ and that $y_\alpha\not\in A_\beta$ for any
$\beta\not=\alpha$ and $y_\alpha\in A_\alpha$. So $(\chi_{A_{\alpha+1}}, 
\delta_{y_{\alpha+1}}-\delta_{y_\alpha}$), say, for
all  limit ordinals $\alpha$ is a nice biorthogonal system.

If there is a biorthogonal system of the form
$(\chi_{A_\alpha},r_\alpha\delta_{y_\alpha}+s_\alpha\delta_{z_\alpha})$ for
 $\alpha<\omega_1$  and $y_\alpha, z_\alpha\in K$, and $r_\alpha, s_\alpha, r_\alpha+s_\alpha\not=0$,
then $r_\alpha, s_\alpha\not\in A_\beta$ for any $\alpha\not=\beta$ and a similar
argument as above gives a nice biorthogonal system. If $r_\alpha+s_\alpha=0$ and
$r_\alpha, s_\alpha\not=0$,
we may assume that $r_\alpha>0$ and so $s_\alpha=-r_\alpha$. 
It follows from the fact that $( r_\alpha\delta_{y_\alpha}+s_\alpha\delta_{z_\alpha})(\chi_{A_\alpha})=1$ that $r_\alpha=1$ and $s_\alpha=-1$, and so we have 
a nice biorthogonal system.

Hence, without loss of generality, we may assume that $r_\alpha, s_\alpha, t_\alpha \neq 0$ for all $\alpha<\omega_1$.
First let us see that there is an uncountable
$X\subseteq \omega_1$ such that $r_\alpha+s_\alpha+t_\alpha=0$ for
all $\alpha\in X$. If not, then there is an uncountable $X\subseteq \omega_1$
and an $\varepsilon>0$ such that $|r_\alpha+s_\alpha+t_\alpha|>\varepsilon$
for each $\alpha\in X$. 

Now  note that as $\mu_\alpha(\chi_{A_\alpha})=1\not=0$, we have $j\in \{1,2,3\}$ such that
$(x_{\eta_\alpha},j)\in A_\alpha$. We may assume that it is the same $j$ for all $\alpha\in X$.
By the form of the basic neighbourhoods of points $(x_{\eta_\alpha},j)$
we have $s\in 2^m$ for some $m\in \N$  such that $(x_{\eta_\alpha},j)\in V_s\cap A_{\eta_\alpha, j}\subseteq A_\alpha$. We may assume that it is the same $s$ for all $\alpha\in X$.
It follows that  for some $n\in \N$ we have
$s=x_{\eta_\alpha}|n$ for
all $\alpha\in X$ an so that $R_{\eta_\alpha}\subseteq V_s$ for all $\alpha\in X$.
Apply the hypothesis of the lemma and obtain $\alpha<\beta$ both in $X$  such that $R_{\eta_\beta}\subseteq A_{\eta_\alpha,j}$
and we get that $R_{\eta_\beta}\subseteq V_s \cap A_{\eta_\alpha,j}\subseteq A_\alpha$.
This means that $0=\mu_\beta(\chi_{A_\alpha})=r_\beta+s_\beta+t_\beta$ contradicting the choice of $\beta\in X$.
So we may assume that $r_\alpha+s_\alpha+t_\alpha=0$ for all $\alpha<\omega_1$.

For three non-zero numbers whose sum is zero, there cannot be any
subsum which is zero, this means, that for $\alpha\not=\beta$, as 
$\mu_\alpha(A_\beta)=0$, we have that either $\{x_\alpha,y_\alpha, z_\alpha\}\cap A_\beta=\emptyset$
or $\{x_\alpha,y_\alpha, z_\alpha\}\subseteq A_\beta$.
So, to make an uncountable nice biorthogonal system out of points $\{x_\alpha,y_\alpha, z_\alpha\}$
and functions $\chi_{A_{\alpha}}$, we need to find any fixed pair of them which
is separated by ${A_{\alpha}}$ for uncountably many $\alpha$'s.

But $A_\alpha$ must separate some pair as $\mu_\alpha(A_\alpha)=1$,
so choose an uncountable subset $Y$ of $\omega_1$ on which the same pair is separated,
say $x_\alpha \in A_\alpha$ and $z_\alpha \notin A_\alpha$.

Define $\nu_\alpha=\delta_{x_\alpha}-\delta_{z_\alpha}$ and
note that $(\chi_{A_\alpha}, \nu_\alpha)_{\alpha\in Y}$
is an uncountable nice biorthogonal system.
\end{proof}

\section{Biorthogonal and semibiorthogonal systems in $C(K_{2n})$'s}

\begin{lemma}\label{lemmanumbers}
Suppose that $\theta>\rho>0$,  $n\in\N$, $n\geq 2$, $r_1,..., r_{2n}$ are reals such that
\begin{enumerate}
\item $|\sum_{1\leq i\leq 2n}r_i|<\rho$,
\item there is $1\leq i_0\leq 2n$ such that $r_{i_0}>\theta$,
\item there is $1\leq i_1\leq 2n$ such that $r_{i_1}=0$.
\end{enumerate}
Then there are $1\leq i,j\leq 2n$ such that
$(-1)^{i+j}=-1$ and 
$$r_i+r_j< {2n\rho-\theta\over n(2n-2)}.$$
\end{lemma}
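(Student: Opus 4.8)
The statement is a purely elementary combinatorial inequality about $2n$ reals, so the plan is to isolate which indices go into the ``odd'' group $O=\{1,3,\dots,2n-1\}$ and which into the ``even'' group $E=\{2,4,\dots,2n\}$, and to average carefully. First I would rename things so as not to lose the parity: we must produce $i,j$ of \emph{opposite} parity (one in $O$, one in $E$) with $r_i+r_j$ small. Note that the index $i_0$ with $r_{i_0}>\theta$ lies in exactly one of $O,E$; by symmetry of the two groups in all the hypotheses, assume $i_0\in O$. Likewise the index $i_1$ with $r_{i_1}=0$ lies in one of the two groups; the useful case to engineer is $i_1\in O$ as well (if $i_1\in E$ one still gets a bound, and in fact the stated bound is the worst case), but I will treat the estimate uniformly.

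Second, the core estimate: consider the sum $S=\sum_{i\in O,\,j\in E}(r_i+r_j)$ over all $n^2$ opposite-parity pairs. Each $r_i$ with $i\in O$ appears in $n$ such pairs and each $r_j$ with $j\in E$ appears in $n$ such pairs, so $S=n\sum_{i\in O}r_i+n\sum_{j\in E}r_j=n\sum_{1\le i\le 2n}r_i$, whence $|S|<n\rho$ by hypothesis (1). Thus the \emph{average} of $r_i+r_j$ over opposite-parity pairs is less than $\rho/n$ in absolute value, so some opposite-parity pair satisfies $r_i+r_j<\rho/n$. This already gives \emph{a} bound, but not the sharper one with $\theta$ in the numerator; to get that, I would remove the large element $r_{i_0}$ from consideration. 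Restrict to opposite-parity pairs $\{i,j\}$ with $i\ne i_0$: there are $(n-1)\cdot n$ of them when $i_0\in O$ (the $n-1$ remaining odd indices paired with the $n$ even indices). The sum over these pairs is $S-\sum_{j\in E}(r_{i_0}+r_j)=n\sum r_i - (n r_{i_0}+\sum_{j\in E}r_j)$. Using $\sum_{j\in E}r_j=\big(\sum_{1\le i\le 2n}r_i\big)-\sum_{i\in O}r_i$ and bounding $\sum_{i\in O}r_i\ge r_{i_0}>\theta$ together with $|\sum r_i|<\rho$, this sum is at most $2n\rho-\theta$ roughly (I will track the constants; this is where hypothesis (3), $r_{i_1}=0$, is used to control $\sum_{i\in O}r_i$ from above by throwing away at least one nonnegative... rather, to keep the other odd terms from forcing the remaining sum up — the zero entry guarantees the ``deficit'' caused by removing $r_{i_0}$ is genuinely felt). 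Dividing by the number $n(2n-2)$... wait: when $i_0\in O$ the count of remaining opposite-parity pairs not using $i_0$ is $(n-1)n$, and the claimed denominator is $n(2n-2)=2n(n-1)$, exactly twice that — this factor of $2$ is the slack that lets the argument go through regardless of whether $i_1$ is odd or even, so in the end I would phrase the averaging over a set of at least $n(n-1)$ pairs and note $n(n-1)\ge n(2n-2)/2$ is false, so instead the denominator $n(2n-2)$ must arise from averaging over roughly $2n(n-1)$ quantities; I would recount, likely averaging $r_i+r_j$ over ordered opposite-parity pairs or over all pairs with the single constraint of avoiding $i_0$, to land exactly on $n(2n-2)$ in the denominator and $2n\rho-\theta$ in the numerator.

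Third, assemble: having found, via the pigeonhole/averaging on the restricted family, an opposite-parity pair $\{i,j\}$ with $r_i+r_j$ below the average $\frac{2n\rho-\theta}{n(2n-2)}$, we are done, since $(-1)^{i+j}=-1$ is exactly the statement that $i,j$ have opposite parity.

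\textbf{Main obstacle.} The only real difficulty is bookkeeping the constants so the denominator is precisely $n(2n-2)$ and the numerator precisely $2n\rho-\theta$: one must decide exactly which family of index pairs to average over (all opposite-parity pairs avoiding $i_0$, possibly also exploiting $r_{i_1}=0$ to drop a further term), and check that in every parity configuration of $i_0$ and $i_1$ the count of pairs is at least $n(2n-2)$ while the total sum is at most $2n\rho-\theta$. Hypotheses (2) and (3) are what make the numerator genuinely smaller than $2n\rho$ (without them the best one can say is $|S'|<2n\rho$ over $2n(n-1)$ pairs, i.e.\ average $<\rho/(n-1)$, which is weaker than needed when $\theta$ is large), so the estimate $\sum_{i\in O}r_i\ge\theta$ combined with $\sum_{1\le i\le 2n}r_i>-\rho$ giving $\sum_{j\in E}r_j<\rho-\theta$ — hence the even terms collectively compensate — is the crux of the numerator bound. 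The inequality $n\ge 2$ is needed so that $2n-2>0$ and the restricted pair-family is nonempty.
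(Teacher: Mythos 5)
Your core estimate does not work, and you have in effect acknowledged this yourself by deferring the "recount" that would make the constants come out. The problem is structural, not just bookkeeping. Averaging $r_i+r_j$ over opposite-parity pairs that avoid $i_0$ weights the two parity classes unequally (each remaining odd index occurs in $n$ pairs, each even index in $n-1$ pairs when $i_0$ is odd), so the resulting sum is $n(\Sigma_O-r_{i_0})+(n-1)\Sigma_E$, and the hypotheses control only $\Sigma_O+\Sigma_E$, not the two group sums separately. Concretely, take $n=2$, $\theta=1$, $\rho=\tfrac12$, and $r_1=2$, $r_2=-1002$, $r_3=1000$, $r_4=0$, so $i_0=1$, $i_1=4$, and $\sum r_i=0$. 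The opposite-parity pairs avoiding $i_0$ are $\{3,2\}$ and $\{3,4\}$, with sums $-2$ and $1000$; their average is $499$, while the required bound is $\frac{2n\rho-\theta}{n(2n-2)}=\tfrac14$. So no averaging over this family (ordered or unordered) can yield the conclusion; the pair you need ($\{3,2\}$ here) must be located by looking at the signs and sizes of individual entries. Relatedly, your use of hypothesis (3) is never made concrete --- the sentence about the zero entry making the ``deficit genuinely felt'' is not an argument, and in the example above the zero entry is present yet the averaging still fails.

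The paper's proof takes a different and essentially sign-based route, which is what the lemma actually requires. From (1) and (2) the $2n-2$ entries other than $r_{i_0}$ and $r_{i_1}$ sum to less than $\rho-\theta<0$, so some $r_{i_2}<-\frac{\theta-\rho}{2n-2}$, which is already below the target bound. If some entry of the opposite parity to $i_2$ is $\le 0$, that pair finishes the proof. Otherwise all $n$ entries of the opposite parity are positive; writing $n+k$ for the number of positive entries, hypothesis (3) caps the number of negative entries at $n-k-1$, and comparing $(n+k)r_{i_4}+(n-k-1)r_{i_5}$ (smallest positive plus most negative, suitably weighted) with the total sum gives $r_{i_4}+r_{i_5}<\frac{(2n-1)\rho-\theta}{n(2n-2)}$. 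If you want to salvage your write-up, you should abandon the averaging over pairs and carry out a case analysis of this kind.
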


\begin{proof}
By (1) and (2), since $\theta>\rho$, there must be an $i_2\in\{1,..., 2n\}
\setminus\{i_0,i_1\}$
such that $$r_{i_2}<-{\theta-\rho\over 2n-2}={\rho-\theta\over 2n-2}
<{2n\rho-\theta\over n(2n-2)}.$$
So, if there is $i_3$ such that $(-1)^{i_2+i_3}=-1$, and
$r_{i_3}\leq 0$, then we are done.
Otherwise, there are at least $n$ positive numbers $r_i$
(at least all $r_i$'s for $i$ of the other parity than $i_2$) and so, by (3), at
most $n-1$ negative numbers $r_i$. 
Let $k\geq 0$ be such that $n+k$ is the number
of positive $r_i$'s, let $r_{i_4}$ be the smallest positive number
among $r_i$'s  and let $r_{i_5}\leq -{\theta-\rho\over 2n-2}$
be the smallest negative number (i.e., of the biggest
absolute value) among $r_i$'s.

So we have 
$$(n+k)r_{i_4}+(n-k-1)r_{i_5}\leq
\sum\{r_i: r_i>0\}+\sum\{r_i: r_i<0\}<\rho$$

So, $$n(r_{i_4}+r_{i_5})+k(r_{i_4}-r_{i_5})<\rho+r_{i_5}
\leq \rho-{\theta-\rho\over 2n-2}$$
But $ r_{i_4}-r_{i_5}$ is non-negative, so

$$r_{i_4}+r_{i_5}
<  (1/n)( \rho-{\theta-\rho\over 2n-2})=
{(2n-1)\rho-\theta\over n(2n-2)},$$
as required.
\end{proof}

\begin{lemma} Let $n\geq 2$.
Suppose that $(f_\alpha)_{\alpha<\omega_1}$ is a sequence
of continuous rational simple functions on $K_{2n}$ as in Theorem
\ref{generictheorem} and 
$(\mu_\alpha)_{\alpha<\omega_1}$ is a sequence of 
$(2n-1)$-supported atomic Radon measures on $K_{2n}$. Then either
there are $\alpha<\beta<\omega_1$ such that 
$$|\int f_\alpha d\mu_\beta|>0.01/2n^2(2n-2)\leqno a)$$
or there is $\alpha\in\omega_1$ such that 
$$\int f_\alpha d\mu_\alpha<0.99\leqno b)$$
or there are $\alpha<\beta<\omega_1$ such that 
$$\int f_\beta d\mu_\alpha<-0.89/2n^2(2n-2).\leqno c)$$
\end{lemma}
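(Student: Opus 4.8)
The plan is to argue by contradiction: suppose none of the three alternatives holds, so that $|\int f_\alpha d\mu_\beta| \leq 0.01/2n^2(2n-2)$ for all $\alpha < \beta$, that $\int f_\alpha d\mu_\alpha \geq 0.99$ for all $\alpha$, and that $\int f_\beta d\mu_\alpha \geq -0.89/2n^2(2n-2)$ for all $\alpha < \beta$. First I would invoke Lemma \ref{lemmaformoffunction}: each $f_\alpha$ can be written as $g_\alpha + \sum_{i \leq k_\alpha} \sum_{l < 2n} q^\alpha_{i,l}\chi_{A_{\xi^\alpha_{i},l} \cap V_{s^\alpha_i}}$ where the error term $\sum_i \max_l |q^\alpha_{i,l}|\, |\mu_\alpha|(V_{s^\alpha_i}\setminus R_{\xi^\alpha_i})$ is controlled; I would apply this with $\mu = \mu_\alpha$ and $\varepsilon$ a small absolute constant (something like $0.005/2n^2(2n-2)$), so that the ``ragged part'' of $f_\alpha$ near the split points barely affects $\int f_\alpha d\mu_\alpha$. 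Since $\mu_\alpha$ is $(2n-1)$-supported and atomic, its atoms lie among split points $(x_\eta, m)$ and ordinary points of $2^\omega\setminus\mathcal{X}$; by a further pigeonhole/$\Delta$-system reduction over $\omega_1$ (using ccc-flavoured counting, shrinking to an uncountable set) I would arrange that the combinatorial ``type'' of everything is fixed: the same $k$, the same pattern of which atoms of $\mu_\alpha$ sit on which coordinate of which $R_{\eta}$, that the finitely many ordinals $\xi^\alpha_i$ used in the expansion of $f_\alpha$ together with the support ordinals of $\mu_\alpha$ form a $\Delta$-system with the same relative order, that $g_\alpha$ is literally a fixed $g \in C(2^\omega)$ (its values being rational and from a fixed finite set, and its ``support level'' bounded), and that the rationals $q^\alpha_{i,l}$ are fixed. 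This is the technically heavy bookkeeping step and, together with correctly isolating the role of the $2n$ atoms, is where the main obstacle lies.

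Once the types are homogenized, I would feed the data into Theorem \ref{generictheorem}. The sets $E_\alpha$ are the ordinals appearing in the expansion of $f_\alpha$ that are relevant, and the functions $\epsilon\colon [k]\times[2n]\to[2n]$ and $\delta\colon[k]\to[n]$ are chosen so that, for the resulting $\alpha<\beta$, the set $A_\beta$-structure of $f_\beta$ relates to the split points underlying $\mu_\alpha$ in a prescribed way, and symmetrically $f_\alpha$ relates to the atoms of $\mu_\beta$: concretely I want $R_{\xi^\beta_i} \subseteq A_{\xi^\alpha_i,\delta(i)}$ (so $f_\beta$ is nearly constant on each cluster that might carry atoms of $\mu_\alpha$, forcing $\int f_\beta d\mu_\alpha$ to essentially equal $(\sum_{\text{atoms}} r)\cdot(\text{that constant})$ up to the small error), and $(x_{\xi^\alpha_i},l)\in A_{\xi^\beta_i,\epsilon(i,l)}$, which pins down exactly how the $2n$ split points over $x_{\xi^\alpha_i}$ are distributed among the pieces $A_{\xi^\beta_i,\cdot}$ defining $f_\alpha$. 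The balance condition on $\epsilon$ required by Theorem \ref{generictheorem} — equal numbers of odd-indexed and even-indexed $l$ mapped to each $j$ — is exactly what one wants when one plans to later exploit Lemma \ref{lemmanumbers}, whose conclusion involves a pair $i,j$ of opposite parity; I would choose $\epsilon$ to witness the worst case that Lemma \ref{lemmanumbers} describes.

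Then I would do the arithmetic. From $\int f_\alpha d\mu_\alpha \geq 0.99$ and the smallness of the error term from Lemma \ref{lemmaformoffunction}, one coordinate-block of $f_\alpha$ (one value $q^\alpha_{i_0,l_0}$, or rather $q_{i_0,l_0}$ after homogenization, combined with $g$) must be large — this plays the role of hypothesis (2) of Lemma \ref{lemmanumbers}, with $\theta$ an absolute constant near $0.99$ minus a tiny slack. Meanwhile $|\int f_\alpha d\mu_\beta|$ being tiny for the specially chosen $\beta$ plays the role of hypothesis (1), giving $\rho$ small, and the presence of an index $i_1$ with $r_{i_1}=0$ (hypothesis (3)) comes from the fact that $\mu_\beta$ has only $2n-1$ atoms in the relevant cluster while there are $2n$ split points $(x_{\xi^\beta_\cdot},1),\dots,(x_{\xi^\beta_\cdot},2n)$, so at least one coordinate carries no atom; the weights $r_1,\dots,r_{2n}$ in Lemma \ref{lemmanumbers} are the masses $\mu_\beta$ places on these $2n$ coordinates (with zero mass on the missing one). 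Lemma \ref{lemmanumbers} then hands me two coordinates $i,j$ of opposite parity with $r_i+r_j < (2n\rho-\theta)/n(2n-2)$, which after choosing $\rho,\theta$ numerically is a negative number of absolute value at least about $0.89/2n^2(2n-2)$ (this is where the constants $0.01$, $0.99$, $0.89$ are tuned). Because $\epsilon$ was chosen so that these opposite-parity coordinates both land in the piece of $A_\alpha$-structure that defines $f_\beta$ on the cluster over $x_{\xi^\alpha_i}$, we get $\int f_\beta d\mu_\alpha \leq r_i + r_j + (\text{tiny error}) < -0.89/2n^2(2n-2)$, which is alternative (c) — contradicting our assumption that (c) fails. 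Hence one of (a), (b), (c) must hold. The delicate points are the exact constant-chasing (choosing $\varepsilon$ in Lemma \ref{lemmaformoffunction} and $\rho,\theta$ in Lemma \ref{lemmanumbers} so all three thresholds $0.01/2n^2(2n-2)$, $0.99$, $0.89/2n^2(2n-2)$ come out consistently) and making sure the homogenization genuinely lets the single pair $\alpha<\beta$ from Theorem \ref{generictheorem} serve simultaneously for the ``$\int f_\alpha d\mu_\beta$ small'' estimate and the ``$\int f_\beta d\mu_\alpha$ very negative'' estimate.
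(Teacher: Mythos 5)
Your overall architecture is the paper's: reduce each $f_\alpha$ to a common form $g+\sum_{i,l}q_{i,l}\chi_{A_{\xi_\alpha^i,l}\cap V_{s_i}}$ via Lemma \ref{lemmaformoffunction} and a $\Delta$-system/separability argument, then combine Theorem \ref{generictheorem} with Lemma \ref{lemmanumbers}. But two genuine gaps remain. First, you never control $\int g\,d\mu_\alpha$. The final estimate is $\int f_\beta\,d\mu_\alpha=\int g\,d\mu_\alpha+\sum_{i,l}q_{i,l}\,\mu_\alpha(A_{\xi_\beta^i,l}\cap V_{s_i})$, and even if your choice of $\epsilon$ makes the second sum very negative, the first term can be as large as $||g||\cdot||\mu_\alpha||$ and wipe out the bound; Lemma \ref{lemmaformoffunction} says nothing about it. The paper disposes of it by a separate contrapositive use of Theorem \ref{generictheorem}: if $|\int g\,d\mu_\beta|$ were not small for all but countably many $\beta$, one places $R_{\xi_\beta^i}$ inside $A_{\xi_\alpha^i,2n}$ --- the index $2n$ whose coefficient is absent from the expansion --- so that $\int f_\alpha\,d\mu_\beta\approx\int g\,d\mu_\beta$ is large and alternative a) holds. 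The same issue infects your derivation of hypothesis (1) of Lemma \ref{lemmanumbers} from ``$|\int f_\alpha\,d\mu_\beta|$ tiny.''

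Second, the quantifier structure. The numbers fed into Lemma \ref{lemmanumbers} must be the coefficient-weighted sums $r_l=\sum_i q_{i,l_0}\mu_\alpha(\{(x_{\xi_\alpha^i},l)\})$, which depend on $\alpha$ alone (not, as you write, the raw masses of $\mu_\beta$ on one cluster --- otherwise hypothesis (2) does not follow from $\int f_\alpha\,d\mu_\alpha\ge 0.99$, and your concluding inequality for $\int f_\beta\,d\mu_\alpha$ involves the wrong measure). Consequently hypotheses (1)--(3) must be established for all but countably many $\alpha$ \emph{before} the final pair is chosen: hypothesis (1), i.e.\ $|\sum_i q_{i,l_0}\mu_\alpha(R_{\xi_\alpha^i})|<\rho$, requires for each $l_0$ yet another contrapositive application of Theorem \ref{generictheorem} (put $R_{\xi_\beta^i}\subseteq A_{\xi_\alpha^i,l_0}$ and use the failure of a)). Only then does one apply the theorem a last time, with $\epsilon$ sending the two opposite-parity indices $l_1,l_2$ to $l_0$ and the remaining $2n-2$ indices to $2n$, to produce the pair witnessing c). Your plan of having ``the single pair $\alpha<\beta$ serve simultaneously'' for the smallness of $\int f_\alpha\,d\mu_\beta$ and the negativity of $\int f_\beta\,d\mu_\alpha$ is precisely the step that cannot work; the argument needs several successive applications of the genericity theorem, each of the form ``either a) holds or a bound holds for co-countably many indices.''
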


\begin{proof} By the separability of $C_0\equiv C({2}^\omega)$ (see Definition
 \ref{cantornotation}), Lemma \ref{lemmaformoffunction} 
and thinning out the sequence, we may assume that for all
$\alpha<\omega_1$ we have
$$f_\alpha=g+\sum_{1\leq i\leq k}\sum_{1\leq l \leq 2n-1}
q_{i,l}\chi_{
 A_{\xi_\alpha^i,l}\cap V_{s_i}}$$
for some simple rational function $g\in C_0$, 
$F_\alpha=\{\xi_\alpha^1,..., \xi_\alpha^k\}\subseteq\omega_1$,
some $s_i\in 2^{m_i}$, $m_i\in N$ and some rationals
$q_{i,l}$, $1\leq i\leq k$ and $1\leq l\leq 2n$
such that  $s_i=r_{\xi_\alpha^i}|m_i$ and such that
$$\sum_{1\leq i\leq k}(\max_{1\leq l\leq 2n}|q_{i,l}|)|\mu_\alpha|(V_{s_i}\setminus R_{\xi_\alpha^i})\leq 0.01/2n^2(2n-2).$$
By thinning out the sequence
(applying the $\Delta$-system lemma, see [Ku])  and moving some identical parts to
$g$ we may assume that $F_\alpha$'s are pairwise disjoint
and $g$ (no longer in $C_0$) is fixed. So, we will be allowed
to use the following decompositions:
\vskip 6pt
\noindent{\textbf{Claim 0.}}
\textit{For each $\alpha,\beta<\omega_1$ we have}
$$\int f_\alpha d\mu_\beta =\int g d\mu_\beta+
\sum_{1\leq i\leq k}
\sum_{1\leq l\leq 2n-1}q_{i,l}\mu_\beta({A_{\xi_\alpha^i,l}\cap R_{\xi_\beta^i}\cap V_{s_i}})
+$$
$$+\sum_{1\leq i\leq k}\sum_{1\leq l\leq 2n-1}q_{i,l}\mu_\beta({A_{\xi_\alpha^i,l}\cap V_{s_i} \setminus R_{\xi_\beta^i}})$$
\vskip 6pt
Here, the last term is small by the above application of Lemma \ref{lemmaformoffunction},
the first term will be shown small by the claim below and so the value of the
integral will depend on the relation of the points from  $R_{\xi_\beta^i,l}$
with the sets $A_{\xi_\alpha^i,l}$ which is ``as we wish" on any uncountable
set by Theorem \ref{generictheorem}.
\vskip 6pt 
\noindent {\textbf{Claim 1.}} \textit{Either a) holds or
for all but countably many $\alpha$'s in $\omega_1$  we have}
$$|\int gd\mu_\alpha|\leq 0.02/2n^2(2n-2).$$
\noindent \textit{Proof of the claim.} 
If the inequality does not hold for uncountably many $\alpha$s, then by
 Theorem \ref{generictheorem} we can find among them $\alpha<\beta<\omega_1$
such that $R_{\xi_\beta^i}\subseteq  A_{\xi_\alpha^i,2n}$
for all $1\leq i\leq k$. By Claim 0 we get that
$$|\int f_\alpha d\mu_\beta|\geq |\int g d\mu_\beta|-
\sum_{1\leq i\leq k}
\sum_{1\leq l\leq 2n-1}|q_{i,l}||\mu_\beta({A_{\xi_\alpha^i,l}\cap V_{s_i}})|\geq$$
$$\geq |\int g d\mu_\beta|-
\sum_{1\leq i\leq k}(\max_{1\leq l\leq m}|q_{i,l}|)|\mu_\beta|(V_{s_i} \setminus R_{\xi_\beta^i})>{0.02-0.01\over2n^2(2n-2)}=\frac{0.01}{2n^2(2n-2)}$$
obtaining a) of the lemma and concluding Claim 1.
\vskip 6pt

\noindent{\textbf{Claim 2.}}
\textit{Either a) holds or for all but countably
 many $\alpha$'s in $\omega_1$ we have
for each $1\leq l_0\leq 2n-1$}
$$|\sum_{1\leq i\leq k}q_{i,l_0}\mu_\alpha(R_{\xi^i_\alpha})|\leq 0.04/2n^2(2n-2)$$

 \noindent \textit{Proof of the claim.}
Without loss of generality
we may assume that the condition from Claim 1 holds for
all $\alpha<\beta<\omega_1$.
Fix $l_0$ as above. 
Suppose that the condition above does not hold for uncountably many $\alpha$s,
then by Theorem \ref{generictheorem} we obtain among them  $\alpha<\beta$ such that
for all $1\leq i\leq k$
$$R_{\xi_\beta^i}\subseteq  A_{\xi_\alpha^i,l_{0}}.$$
 
So by Claim 0 we have
$$|\int f_\alpha d\mu_\beta|\geq 
|\sum_{1\leq i\leq k}\sum_{1\leq l\leq 2n-1}q_{i,l}\mu_\beta({R_{\xi^i_\beta}}\cap{A_{\xi_\alpha^i,l}\cap V_{s_i}})|
-|\int g d\mu_\beta|-$$
$$-\sum_{1\leq i\leq k}(\max_{1\leq l\leq 2n-1}|q_{i,l}|)|\mu_\beta|(V_{s_i}\setminus R_{\xi_\beta^i})\geq$$
$$\geq|\sum_{1\leq i\leq k}q_{i,l_0}\mu_\beta (R_{\xi_\beta^i})|
-|\int g d\mu_\beta|-
\sum_{1\leq i\leq k}(\max_{1\leq l\leq 2n-1}|q_{i,l}|)|\mu_\beta|(V_{s_i}\setminus R_{\xi_\beta^i})>$$
$$>{0.04-0.02-0.01\over2n^2(2n-2)}=0.01/2n^2(2n-2),$$
obtaining a) and concluding Claim 2.
\vskip 6pt

\noindent{\textbf{Claim 3.}} \textit{Either a) or b) holds or there is 
$l_0\in\{1,..., 2n\}$  such that for 
uncountably many $\alpha$'s in $\omega_1$ we have}
$$\sum_{1\leq i\leq k} q_{i,l_0}\mu_\alpha(\{(x_{\xi^i_\alpha},l_0)\})>
0.96/(2n)$$
\noindent \textit{Proof of the claim.} 
Assume that a) does not hold, i.e., the conditions
from Claim 1 and Claim 2 hold for all $\alpha<\omega_1$.  Now, suppose also
that the condition from Claim 3 does not hold for any $l_0\in\{1,...,2n\}$.
By Claim 0 for $\alpha=\beta$  we have $$\int f_\alpha d\mu_\alpha\leq
\sum_{1\leq i\leq k}\sum_{1\leq l\leq 2n-1} q_{i,l}\mu_\alpha(\{(x_{\xi^i_\alpha},l)\})+$$

$$-0.02/2n^2(2n-2)-0.01/2n^2(2n-2)\leq(2n-1)0.96/(2n)- {0.03}< 0.99$$
that is we obtain b), which concludes the proof of Claim 3.
 
\vskip 6pt
To finish the proof of the lemma, we need to  assume that a) and b) fail,
i.e., the conditions of all the above claims hold, and we need to get c).
Fix $\alpha<\omega_1$,  we will apply Lemma \ref{lemmanumbers}
for 
$$r_{l,\alpha}=\sum_{1\leq i\leq k} q_{i,l_0}\mu_\alpha(\{(x_{\xi^i_\alpha},l)\})$$
and $l\in\{1,..., 2n\}$. 
By the fact that the supports of the measures $\mu_\alpha$ have
at most $2n-1$ elements, one of $r_{l,\alpha}$'s must be zero.
By Claim 3 we have that $r_{l_0,\alpha}>\theta=0.96/(2n)$
and by Claim 2 that $\sum_{1\leq l\leq 2n}r_{l,\alpha}<\rho=0.04/(2n)^2$.
So by Lemma \ref{lemmanumbers} we find
$1\leq l_{1,\alpha},l_{2,\alpha}\leq 2n$ of different parities such that 
$$\sum_{1\leq i\leq k} q_{i,l_0}\mu_\alpha(\{(x_{\xi^i_\alpha},l_{1,\alpha}),
(x_{\xi_\alpha^i},l_{2,\alpha})\})<{2n\rho-\theta\over n(2n-2)}=$$
$$={2n(0.04/(2n)^2)-0.96/(2n)\over n(2n-2)}=-{0.92\over2n^2(2n-2)}.$$
We may assume that $l_{1,\alpha}=l_1$ and $l_{2,\alpha}=l_2$ for
all $\alpha<\omega_1$.
Note that 
 by Theorem \ref{generictheorem}
we can find $\alpha<\beta<\omega_1$ 
such that 
$$\{ (x_{\xi_\alpha^i},l_1),   (x_{\xi_\alpha^i},l_2) \}\subseteq A_{\xi_\beta^i,l_0}$$
and
$$R_{\xi^i_\alpha}\setminus   \{ (x_{\xi_\alpha^i},l_1),   (x_{\xi_\alpha^i},l_2) \}   \subseteq A_{\xi_\beta^i, 2n}$$
for all $1\leq i\leq k$. 
This and Claim 0 with $\alpha$ and $\beta$ switched implies that
$$\int f_\beta d\mu_\alpha\leq \sum_{1\leq i\leq k}q_{i,l_0}\mu_\alpha(\{ (x_{\xi_\alpha^i},l_1), (x_{\xi_\alpha^i},l_2)\})+$$
$$+\sum_{1\leq i\leq k}\max_{1\leq l\leq 2n-1}(|q_{i,l}|)|\mu_\alpha|(V_{s_i}\setminus R_{\xi_\alpha^i})+ |\int g d\mu_\alpha|\leq$$ 
$$\leq {-0.92+0+0.01+0.02\over (2n)^2(2n-2)}=-0.89/(2n)^2(2n-2)$$
which completes the proof of the lemma.
\end{proof}

\begin{theorem}  Let $n\geq 2$.
If $K_{2n}$ is an unordered $2n$-split Cantor set as in Theorem \ref{generictheorem}, then 
there are no uncountable semibiorthogonal sequences 
in $C(K_{2n})$  
whose functionals are $(2n-1)$-supported but there is an uncountable biorthogonal
system whose functionals are $2n$-supported.
\end{theorem}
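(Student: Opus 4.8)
The plan is to combine the two halves that have already been set up in the excerpt. The existence of an uncountable biorthogonal system whose functionals are $2n$-supported is immediate: by Theorem \ref{generictheorem} the space $K_{2n}$ is an unordered $2n$-split Cantor set arising from a balanced $2n$-splitting family, so Lemma \ref{property6}(b) applies verbatim and yields the system $(f_\xi,\mu_\xi)_{\xi<\omega_1}$ with $f_\xi=\chi_{A_{\xi,2n}}$ and $\mu_\xi=\sum_{k=1}^n(\delta_{(x_\xi,2k)}-\delta_{(x_\xi,2k-1)})$, whose functionals are $2n$-supported. So the only real content is the nonexistence of an uncountable semibiorthogonal sequence with $(2n-1)$-supported functionals.

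For that direction, the first reduction I would make is to pass from an arbitrary $(2n-1)$-supported semibiorthogonal sequence to one whose $C(K)$-part consists of rational simple functions. Given $(f_\alpha,\mu_\alpha)_{\alpha<\omega_1}$ with $\mu_\alpha^*(f_\alpha)=1$, $\mu_\alpha^*(f_\beta)=0$ for $\beta<\alpha$ and $\mu_\alpha^*(f_\beta)\geq 0$ for $\alpha<\beta$, I would approximate each $f_\alpha$ in the sup-norm by a rational simple function $g_\alpha$ closely enough that, since $\|\mu_\alpha\|$ is bounded on an uncountable subset (thin out so that $\|\mu_\alpha\|\le M$ for a fixed $M$ on an uncountable set), the quantities $\int g_\alpha\,d\mu_\beta$ differ from $\int f_\alpha\,d\mu_\beta$ by at most a small absolute constant. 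One must be a bit careful that semibiorthogonality is only a family of inequalities/equalities, not a normalization one can rescale freely; but the three-way alternative in the previous Lemma is stated with explicit numerical slacks ($0.01/2n^2(2n-2)$, $0.99$, $0.89/2n^2(2n-2)$) precisely so that a sufficiently fine rational-simple approximation cannot destroy it. After this reduction, the hypotheses of the previous Lemma are met by $(g_\alpha)_{\alpha<\omega_1}$ and $(\mu_\alpha)_{\alpha<\omega_1}$, so one of the three conclusions (a), (b), (c) holds.

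Now I would derive a contradiction from each of (a), (b), (c) using the semibiorthogonality of the original sequence. Conclusion (b) says $\int g_\alpha\,d\mu_\alpha<0.99$ for some $\alpha$; but $\int f_\alpha\,d\mu_\alpha=1$ and the approximation was chosen fine enough that $\int g_\alpha\,d\mu_\alpha$ is within, say, $0.01$ of $1$, contradiction. Conclusion (a) gives $\alpha<\beta$ with $|\int g_\alpha\,d\mu_\beta|>0.01/2n^2(2n-2)$; but the semibiorthogonal sequence is indexed so that $\mu_\beta^*(f_\alpha)=0$ when $\alpha<\beta$, hence $\int g_\alpha\,d\mu_\beta$ is within the approximation error of $0$ — again a contradiction once the error bound is below $0.01/2n^2(2n-2)$. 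Conclusion (c) gives $\alpha<\beta$ with $\int g_\beta\,d\mu_\alpha<-0.89/2n^2(2n-2)$, which contradicts $\mu_\alpha^*(f_\beta)\ge 0$ (the only semibiorthogonality condition used in the ``wrong" direction), since then $\int g_\beta\,d\mu_\alpha$ must be within the approximation error of a nonnegative number. Note that it is exactly here that \emph{semi}biorthogonality — not full biorthogonality — suffices, and that the numbers $0.99$, $0.01/2n^2(2n-2)$, $0.89/2n^2(2n-2)$ in the Lemma leave enough room for all three contradictions simultaneously with a single choice of approximation tolerance (e.g.\ $0.01/2n^2(2n-2)$).

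\textbf{The main obstacle} I anticipate is the bookkeeping in the reduction to rational simple functions: one must simultaneously (i) uniformly bound $\|\mu_\alpha\|$, which requires a thinning-out and the observation that the sequence may as well be taken with $\|\mu_\alpha\|\le M$ for some $M$ (using that an uncountable set of $\alpha$'s has bounded norm, or else rescaling cannot be done for free in the semibiorthogonal case — so instead one restricts to $\{\alpha:\|\mu_\alpha\|\le m\}$ for the least $m$ making this uncountable), (ii) choose the approximation error $\varepsilon_\alpha$ small relative to both $M$ and the fixed numerical slacks $1/n$-type constants, uniformly in $\alpha$, and (iii) verify that the rational-simple $g_\alpha$ can still be arranged to lie in some $C_\xi$ so that Lemma \ref{lemmaformoffunction} and Theorem \ref{generictheorem} apply. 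Once the uniform bound on $\|\mu_\alpha\|$ is in place, each of these is routine, and the three-alternative structure of the previous Lemma does all the combinatorial heavy lifting via Theorem \ref{generictheorem}.
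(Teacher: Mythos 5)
Your proposal is correct and follows essentially the same route as the paper: the existence half is exactly Lemma \ref{property6}(b), and the nonexistence half is the paper's argument of thinning to a uniform bound $\|\mu_\alpha\|\le M$, replacing each $f_\alpha$ by a rational simple function within $0.01/2Mn^2(2n-2)$ in norm, and then observing that alternatives a) and b) of the trichotomy lemma are excluded by the semibiorthogonality relations (up to the approximation error), so c) holds and contradicts $\int f_\beta\,d\mu_\alpha\ge 0$. The bookkeeping points you flag (the $M$ in the tolerance, the uncountable subfamily with bounded norm) are handled in the paper exactly as you describe.
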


\begin{proof}
 Suppose $(f_\alpha,\mu_\alpha)_{\alpha<\omega_1}\subseteq C(K_{2n})\times
M(K_{2n})$ is a sequence whose functionals are $2n-1$-supported
and that $\int f_\alpha d\mu_\beta=0$ for all $\alpha<\beta<\omega_1$ as
well as $\int f_\alpha d\mu_\alpha=1$ for all $\alpha<\omega_1$.

 We may assume without loss of generality that
$||\mu_\alpha||\leq M$ for some positive $M$. By the
Stone Weierstrass theorem we can chose $f_\alpha'\in C(K)$
which is a rational simple function and
$$||f_\alpha'-f_\alpha||< 0.01/2Mn^2(2n-2).$$
This means that a) and b) of Lemma 14 do not hold, for
$f_\alpha'$'s instead of $f_\alpha$'s i.e.,
c) holds, which implies that $(f_\alpha,\mu_\alpha)_{\alpha<\omega_1}$
is not semibiorthogonal.
\end{proof}

\begin{theorem}
If $K_{2n}$ is an unordered $2n$-split Cantor set as in Theorem \ref{generictheorem}, then $hd(K_{2n}^n)=\omega$.
\end{theorem}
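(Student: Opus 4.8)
The plan is to show that $K_{2n}^n$ is hereditarily separable by showing that it has no uncountable discrete subspace after passing to an arbitrary subspace, or more directly, by a combinatorial argument that any uncountable subset of $K_{2n}^n$ has a countable dense subset. Since $K_{2n}$ is separable (it is a continuous image-free quotient-like object built over $2^\omega$, and indeed the points of $2^\omega\setminus\mathcal X$ together with a countable selection suffice), the natural route is: given an uncountable $Y\subseteq K_{2n}^n$, pass to a point in its closure and argue that a countable subset of $Y$ already approximates it. I would first reduce to the case that $Y=\{(y_\alpha^1,\dots,y_\alpha^n):\alpha<\omega_1\}$ where each coordinate $y_\alpha^m$ is of the form $(x_{\xi_\alpha^m},j_\alpha^m)$ — that is, a ``split'' point — since points coming from $2^\omega\setminus\mathcal X$ contribute nothing new (they lie in the separable $C_0$-part and can be handled by metrizability of $2^\omega$), and a tuple with some coordinates split and some not can be treated coordinatewise by combining the two cases.

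Next I would use a $\Delta$-system / pruning argument on the finite sets $E_\alpha=\{\xi_\alpha^1,\dots,\xi_\alpha^n\}\subseteq\omega_1$: after thinning, these are pairwise disjoint (the coordinates repeating along $\alpha$ only help), the profiles $j_\alpha^m$ are constant in $\alpha$, and by refining using the fact that $2^\omega$ is second countable, the traces $x_{\xi_\alpha^m}|N$ stabilize for each fixed $N$ as we look at a tail. The key point then is that to witness non-separability we would need an uncountable subset of $Y$ that is relatively discrete, i.e.\ a matching family of basic open boxes $U_\alpha=\prod_{m=1}^n (V_{s_m}\cap A_{\xi_\alpha^m,j_\alpha^m})$ with $(y_\beta^1,\dots,y_\beta^n)\notin U_\alpha$ for all $\beta\neq\alpha$. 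I would invoke Theorem \ref{generictheorem} with $k=n$: choosing $\delta:[n]\to[n]$ appropriately (say $\delta(i)=j_\alpha^i$, using that the profile stabilized) and choosing $\epsilon:[n]\times[2n]\to[2n]$ to be a balanced function sending, for each $i$, the value $j^i$ to itself, we obtain $\alpha<\beta$ with $R_{\xi_\beta^i}\subseteq A_{\xi_\alpha^i,\delta(i)}$ for all $i$. Since $R_{\xi_\beta^i}\ni (x_{\xi_\beta^i},j_\beta^i)=y_\beta^i$ and $\delta(i)=j_\alpha^i$, this forces $y_\beta^i\in A_{\xi_\alpha^i,j_\alpha^i}$, and combined with the stabilized traces $x_{\xi_\beta^i}|m_i = x_{\xi_\alpha^i}|m_i$ giving $y_\beta^i\in V_{s_i}$, we conclude $(y_\beta^1,\dots,y_\beta^n)\in U_\alpha$ — contradicting discreteness.

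The main obstacle is the bookkeeping that reduces an arbitrary uncountable subset of $K_{2n}^n$ to the clean ``all coordinates split, profiles and traces stabilized, index sets disjoint'' situation, and in particular making sure that the basic neighbourhoods one must defeat are genuinely of the box form $\prod_m (V_{s_m}\cap A_{\xi^m,j^m})$ with the $s_m$ long enough that the $V_{s_m}$ are already determined by the (stabilized) finite initial segments of the $x_{\xi_\alpha^m}$; this is where the precise form of $\mathcal B_x$ from the definition of the unordered $2n$-split Cantor set and Lemma \ref{lemma3} get used. One must also be careful that hereditary separability is not literally ``no uncountable discrete subspace'' but ``every subspace is separable''; however for regular spaces (and $K_{2n}^n$ is compact Hausdorff hence regular) one argues that if every subspace were separable it suffices to rule out left-separated sequences of length $\omega_1$, and the argument above, applied to an arbitrary would-be left-separated enumeration, does exactly that — one takes the $\alpha<\beta$ produced by Theorem \ref{generictheorem} and observes that the $\beta$-th point falls into a basic neighbourhood of the $\alpha$-th point, contradicting left-separation. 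The only genuinely delicate quantitative input is that Theorem \ref{generictheorem} is stated for exactly $k$ many coordinates with a single balanced constraint on $\epsilon$, so one must check that the desired $\delta$ and $\epsilon$ meet its hypotheses, which they do because $n$ coordinates and the balancedness condition impose no obstruction to the choices described above.
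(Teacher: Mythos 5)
Your overall strategy (reduce to ruling out an uncountable left-separated sequence, stabilize the coordinate data by a $\Delta$-system argument, and then apply Theorem \ref{generictheorem}) is the paper's strategy, but the final step has the separation direction backwards, and this is a genuine error rather than bookkeeping. A left-separated enumeration $(y_\alpha)_{\alpha<\omega_1}$ provides neighbourhoods $U_\beta$ with $y_\alpha\notin U_\beta$ for all $\alpha<\beta$; to refute it you must produce $\alpha<\beta$ with the \emph{earlier} point in the \emph{later} neighbourhood, i.e.\ $y_\alpha\in U_\beta$. What you derive from the $\delta$-clause of Theorem \ref{generictheorem} (namely $R_{\xi^i_\beta}\subseteq A_{\xi^i_\alpha,\delta(i)}$) is $y_\beta\in U_\alpha$, the later point in the earlier neighbourhood. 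That would contradict discreteness or right-separation (so it bounds spread or hereditary Lindel\"of degree), but it does not contradict left-separation and hence does not prove $hd(K_{2n}^n)=\omega$; nor can you reverse the enumeration, since the reverse order of $\omega_1$ is not a well-order of type $\omega_1$. The correct move, which is what the paper does, is to use the $\epsilon$-clause: $(x_{\xi^i_\alpha},l)\in A_{\xi^i_\beta,\epsilon(i,l)}$ with $\epsilon(i,j_m)=j_m$ for the relevant $j_m$, which places $y^m_\alpha$ into $A_{\xi^m_\beta,j_m}\cap V_{s_m}=U^m_\beta$.

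Once you switch to the $\epsilon$-clause, the step you dismiss as imposing ``no obstruction'' becomes the heart of the proof and needs an argument. For a fixed index $i$ of the $\Delta$-system there may be up to $n$ coordinates $m$ with $\xi^m_\alpha=\eta^i_\alpha$ (your reduction tacitly assumes the $\xi^m_\alpha$ are pairwise distinct, which need not hold), hence up to $n$ values $j_m$ that $\epsilon(i,\cdot)$ must fix, while $\epsilon(i,\cdot)$ must also satisfy the balance condition
$|\{l\in\{1,3,\dots,2n-1\}:\epsilon(i,l)=j\}|=|\{l\in\{2,4,\dots,2n\}:\epsilon(i,l)=j\}|$ for every $j$. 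The paper's Claim shows such an $\epsilon$ exists precisely because a set of at most $n$ values can be enlarged to a set $I_i$ of size exactly $n$ admitting a parity-switching bijection $\sigma_i$ onto its complement; one then puts $\epsilon(i,l)=l$ on $I_i$ and $\epsilon(i,l)=\sigma_i^{-1}(l)$ off $I_i$. This is exactly where the exponent $n$ enters and where the argument must break down for $K_{2n}^{n+1}$ (which has an uncountable discrete subspace), so it cannot be waved through. The remaining bookkeeping you describe (coordinates in $2^\omega\setminus\mathcal X$, indices in the root $\Delta$) is handled as in the paper and is indeed routine.
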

\begin{proof}
We will be using the well-known fact that a regular space is hereditarily separable
if and only if it has no uncountable left-separated sequence (see Theorem 3.1 of \cite{roitman}).

Suppose $(y_\alpha)_{\alpha<\omega_1}$ is a
left-separated sequence in $K_{2n}^n$ of cardinality $\aleph_1$. Hence, for each
$\alpha<\omega_1$, $y_\alpha = (y_\alpha^1, \dots,
y^n_\alpha)$, where each $y_\alpha^m \in K_{2n}$ and, by 
the definition of a left-separated sequence, for each $\alpha < \omega_1$ and each $m \in [N]$, 
there is an open basic neighbourhood $U_\alpha^m$ of $y^m_\alpha$ such that
$$\forall \alpha < \omega_1 \quad \forall m \in [n] \quad y^m_\alpha \in U^m_\alpha$$
and that
$$\forall \alpha < \beta < \omega_1 \quad \exists m \in [n] \quad y^m_\alpha \notin U^m_\beta.$$

We may assume without loss of generality that 
$$\{m \in [n]: y_\alpha^m \in 2^\omega \setminus \{x_\xi: \xi< \omega_1\}\} 
= \{m \in [n]: y_\beta^m \in 2^\omega  \setminus \{x_\xi: \xi< \omega_1\}\}$$
for every $\alpha<\beta<\omega_1$ and let us call this set $I$. 

For each $m \in [n] \setminus I$, let $\xi^m_\alpha$ be a countable ordinal and 
$j_\alpha^m$ be an element of $[n]$ such that $y^m_\alpha = (x_{\xi^m_\alpha}, j^m_\alpha)$.

Now, for each $m \in [n]$, let $s^m_\alpha \in 2^{<\omega}$ such that
$$U^m_\alpha = \left\{\begin{array}{cl}
V_{s^m_\alpha} & \text{if } m \in I\\
V_{s^m_\alpha} \cap A_{\xi^m_\alpha, j^m_\alpha}& \text{if } m \notin I
\end{array}\right.$$

Put $E_\alpha = \{\xi^m_\alpha: m \in [n] \setminus I\}$.

Without loss of generality, we may assume that:
\begin{itemize}
\item there is $j_m \in [n]$ such that $j^m_\alpha = j_m$ for all $\alpha<\omega_1$;
\item there is $s_m \in 2^{<\omega}$ such that $s^m_\alpha = s_m$ for all $\alpha<\omega_1$ (this already guarantees that
each $y^m_\alpha \in V_{s_m}$);
\item for all $m \in [n] \setminus I$, either 
$$\forall \alpha<\beta<\omega_1 \quad \xi^m_\alpha = \xi^m_\beta,$$
or 
$$\forall \alpha<\beta<\omega_1 \quad \xi^m_\alpha < \xi^m_\beta.$$
\item $(E_\alpha)_{\alpha<\omega_1}$ is a $\Delta$-system with root $\Delta$ such that for every $\alpha<\beta<\omega_1$, 
$\Delta < E_\alpha \setminus \Delta < E_\beta \setminus \Delta$ and $|E_\alpha| = |E_\beta|$.
\end{itemize}

If $E_\alpha \setminus \Delta = \emptyset$, the left-separated sequence in $K_{2n}^n$ would lead 
to a left-separated sequence in a finite power of $2^\omega$, which is not possible since 
$2^\omega$ is hereditarily separable in all finite powers. Therefore, each $E_\alpha \setminus \Delta \neq \emptyset$ and
they are pairwise disjoint.

For each $\alpha<\omega_1$, enumerate $E_\alpha \setminus \Delta = \{\eta^1_\alpha < \dots < \eta^k_\alpha\}$.
We may assume that $\xi^m_\alpha = \eta^i_\alpha$ if and only if $\xi^m_\beta = \eta^i_\beta$.

\vspace{6pt}

\noindent \textbf{Claim.} \textit{For each $1 \leq i \leq k$, 
it is possible to find $I_i \subseteq [2n]$ of cardinality $N$ and a bijection $\sigma_i: I_i \rightarrow [2n] \setminus I_i$
such that $\sigma_i(l)$ and $l$ have opposite parity and}
$$\{j \in [2n]: \exists m \in [n] \text{ such that } j = j_m \text{ and } \xi^m_\alpha = \eta^i_\alpha\} \subseteq I_i.$$

\noindent \textit{Proof of the claim.} It follows simply from the fact that the set
$$\{j \in [2n]: \exists m \in [n] \text{ such that } j = j_m \text{ and } \xi^m_\alpha = \eta^i_\alpha\}$$
has cardinality at most $n$ so that we can find $I_i$ containing it and that whenever we have a partition of $[2n]$ into two sets $A$ and $B$, both of size $n$, 
$A$ has as many odds as $B$ has evens, and vice-versa. This concludes the proof of the claim.

\vskip 6pt
Now, let $\epsilon:[k]\times[2n] \rightarrow [2n]$ be defined by
$$\epsilon (i, l) = \left\{\begin{array}{cl}
l & \text{if } l \in I_i\\
\sigma_i^{-1}(l) & \text{if } l \in [2n] \setminus I_i
\end{array}\right.$$

Notice that for each $i \in [k]$, $l \in I_i$ and $j \in [2n]$, $\epsilon(i,l) = j$ if and only if $\epsilon(i,\sigma(l)) = j$. Since $\sigma(l)$ and $l$ have
opposite parities, we get that $\epsilon$ has the desired property, that is,
$$|\{l\in\{1,3,5,...,2n-1\}: \epsilon(i,l)=j\}|= |\{l\in\{2,4,6,...,2n\}: \epsilon(i,l)=j\}|.$$

By Theorem \ref{generictheorem}, there are $\alpha< \beta$ such that for all $i \in [k]$, 
$$(x_{\eta^i_\alpha}, l) \in A_{\eta^i_\beta, \epsilon(i,l)}.$$

Fix $m \in [n]$ and let us prove that $y^m_\alpha \in U^m_\beta$, contradicting the assumption. If
$m \notin I$, then  $y_\alpha^m  \in V_{s_m} = U_\beta^m$. If $m \in I$ and $\xi_\alpha^m \in \Delta$, then
$\xi^m_\alpha = \xi^m_\beta \in U^m_\beta$. Finally, if $m \in I$ and $\xi^m_\alpha \notin \Delta$,
then there is $i \in [k]$ such that $\xi^m_\alpha = \eta^i_\alpha$ and $\xi^m_\beta = \eta^i_\beta$. In this case we
have that $j_m \in I_i$ and so, $\epsilon(i,j_m)=j_m$, which guarantees that
$$y^m_\alpha =  (x_{\xi^m_\alpha}, j_m) = (x_{\eta^i_\alpha}, j_m) \in A_{{\eta^i_\beta}, j_m} = A_{{\xi^m_\beta}, j_m}.$$
Since also $y^m_\alpha  \in V_{s_m}$, we get that $y^m_\alpha  \in U^m_\beta$, which concludes the proof.
\end{proof}

\bibliographystyle{amsplain}

\end{document}